\documentclass[12pt]{amsart}

\usepackage{graphicx}

\usepackage{subcaption} 
\usepackage{xfrac}   
\usepackage{faktor}
\usepackage{tikz-cd}
\usepackage{mathrsfs}
\usepackage{hyperref}
\usepackage{amsmath}
\usepackage{amssymb}
\hypersetup{bookmarks=true,
	unicode=true,
	colorlinks=true,
	citecolor=black,
	linkcolor=black,
	urlcolor=black,
	plainpages=false,
	pdfpagelabels=true}

 \usepackage{url}	
 \allowdisplaybreaks 

\usepackage{tikz-cd}
\usepackage{pgf}

\usepackage{xcolor}

\usepackage{comment} 

\usepackage[all]{xy}

\newtheorem{teo}{Theorem}[section]

\newtheorem{thm}[teo]{Theorem}
\newtheorem{cor}[teo]{Corollary}

\newtheorem{lemma}[teo]{Lemma}
\newtheorem{prop}[teo]{Proposition}

\theoremstyle{definition}

\newtheorem{defi}[teo]{Definition}

\newtheorem{ex}[teo]{Example}

\theoremstyle{remark}

\newtheorem{rem}{Remark}

\numberwithin{figure}{section}

\newcommand{\CC}{\mathcal{C}}

\newcommand{\DD}{\mathcal{D}}

\newcommand{\cat}{\mathop{\mathrm{cat}}}

\newcommand{\TC}{\mathrm{TC}}

\newcommand{\Dc}{\mathcal{D}}

\newcommand{\Vv}{\mathbb{V}}

\newcommand{\op}{\mathrm{op}}

\newcommand{\Top}{\mathrm{Top}}

\newcommand{\X}{\mathbb{X}}

\newcommand{\Y}{\mathbb{Y}}

\newcommand{\V}{\mathbb{V}}
\newcommand{\T}{\mathbb{T}}

\newcommand{\Hom}{\mathrm{Hom}}

\newcommand{\Z}{\mathbb{Z}}
\newcommand{\Obj}{\mathrm{Obj}}
\newcommand{\Set}{\mathrm{Set}}

\newcommand{\Aa}{\mathbb{A}}
\newcommand{\Ee}{\mathbb{E}}
\newcommand{\Bb}{\mathbb{B}}
\newcommand{\Uu}{\mathbb{U}}

\begin{document}

\title[Motion planning for diagrams of spaces]{Motion planning and topological complexity for diagrams of spaces}
\thanks{}

\author[Isaac Carcacía-Campos]{%
	Isaac Carcacía-Campos
}

 \address{%
           	Isaac Carcacía-Campos
            \\
              Departamento de Matem\'aticas, Universidade de Santiago de Compostela, 15782-SPAIN}
               \email{isaac.c.campos@usc.es}


\begin{abstract} 
A diagram of spaces describes a system of interacting state spaces. Motion planning in such a system is not merely an objectwise problem since the selected paths must form a natural family. This leads to a notion of topological complexity for diagrams, defined as the sectional category of their endpoint evaluation map.

The role of points in this setting is played by orbits, namely diagrams whose colimit is a point. Different choices of admissible orbit shapes give rise to different levels of categorical compression. We consider the families of representable, discrete and arbitrary topological orbits, and the corresponding orbit-relative Lusternik--Schnirelmann categories. Their relationship with sectional category provides upper bounds for the topological complexity of a diagram, while its values and suitable inverse limits provide lower bounds. For diagrams indexed by a finite discrete group, the construction recovers equivariant LS-category, equivariant sectional category and equivariant topological complexity.
\end{abstract}

\keywords{topological complexity, Lusternik--Schnirelmann category, sectional category, diagrams of spaces, orbit families, motion planning
}
\subjclass[2020]{
Primary 55M30;
Secondary 55U35, 55P91.
}

\maketitle


\section*{Introduction}

Topology studies topological properties. Beneath this apparent
triviality lies a certain wisdom. Once topological properties are
understood as invariants under an appropriate notion of equivalence,
such as homeomorphism or homotopy equivalence, a more meaningful
picture of topology emerges. Following this perspective, one may study
numerical invariants that encode properties of intrinsic interest and,
at the same time, have significant applications.

A central example is the topological complexity introduced by Farber
\cite{FARBER}. It measures the discontinuity inherent in the motion
planning problem: how many open subsets are needed in order to assign,
continuously on each of them, a path joining any given pair of points?
Topological complexity belongs to a broader family of invariants defined
through open covers and local homotopical conditions. Among them are the
Lusternik--Schnirelmann category, introduced in
\cite{LS-CATEGORY} and developed extensively in \cite{CLOT}, and the
\v{S}varc genus, or sectional category, originally introduced for
fibrations in \cite{Svarc}. The latter measures how far a map is from
admitting a global section and provides a common framework for both
Lusternik--Schnirelmann category and topological complexity.

These invariants have also been extended to spaces equipped with group
actions. Equivariant LS-category was studied by Marzantowicz
\cite{Marzantowwicz} and Colman \cite{Colman}, while Colman and Grant
introduced equivariant sectional category and equivariant topological
complexity in \cite{Grant-Colman-Equivariant}. 

Other extensions incorporate additional data into motion planning.
Parametrized topological complexity treats external conditions as part
of the input
\cite{CohenFarberWeinberger,FarberWeinberger}, while bivariate
topological complexity coordinates trajectories through a common target
\cite{GarciaCalcinesVilches}. These approaches reflect a broader shift
from isolated configuration spaces to systems involving parameters,
symmetries or compatibility constraints.

The purpose of this article is to develop a theory of topological
complexity for diagrams of spaces. A diagram may describe a system of
interacting state spaces in which the continuous maps associated with
the indexing category encode how states and motions at different
components are related. A motion planner for such a system must assign
paths simultaneously and naturally: applying a structure map to a
selected path must produce the path selected for the corresponding pair
of image states. The resulting problem is therefore not determined by
the individual state spaces alone.

A discrete group can be regarded as a category with one object, whose
morphisms are the elements of the group. Accordingly, an action of a
discrete group $G$ on a topological space is equivalently a functor from
$G$ to $\Top$. This suggests replacing $G$ by an arbitrary small
category $\CC$ and studying $\CC$-spaces, that is, functors
$\X\colon\CC\to\Top$. For a non-discrete topological group, an enriched
formulation would be required. We work throughout with
ordinary small indexing categories; an enriched extension lies beyond
the scope of this paper.

The homotopy theory of diagrams of spaces was developed by Dror Farjoun
and Zabrodsky
\cite{Farjou_Zabrodsky,Farjoun_Hom,FarjounBig}. A central role in this
theory is played by orbits, namely $\CC$-spaces whose colimit is a point.
Orbit-based model structures and the size of their classes of orbits
were further studied in \cite{Chorny2003}, and an Elmendorf-type theorem
for diagrams was established in \cite{Housden}. In the present work,
orbits provide the generalized points through which categorical
compressions are allowed to factor.

Rather than fixing a single collection of orbit shapes, we work relative
to orbit families containing all representable $\CC$-spaces. Three
canonical families are considered: representable orbits, discrete
orbits given by indecomposable $\CC$-sets, and arbitrary topological
orbits. They form an increasing sequence
\[
\mathcal O_{\mathrm{rep}}(\CC)
\subseteq
\mathcal O_{\mathrm{disc}}(\CC)
\subseteq
\mathcal O_{\mathrm{top}}(\CC)
\]
and give rise to three orbit-relative
Lusternik--Schnirelmann categories satisfying
\[
\cat_{\CC}^{\mathrm{top}}(\X)
\leq
\cat_{\CC}^{\mathrm{disc}}(\X)
\leq
\cat_{\CC}^{\mathrm{rep}}(\X).
\]
This construction is a diagrammatic analogue of the category relative
to a class of spaces introduced by Clapp and Puppe
\cite{ClappPuppe1986}. The three invariants need not coincide: we give
an example in which both inequalities above are strict. Thus, enlarging
the class of admissible orbit shapes genuinely changes the possible
categorical compressions of a diagram.

The sectional category of a morphism of $\CC$-spaces is defined using
open covers and homotopy local sections that are natural with respect to
$\CC$. The topological complexity of a $\CC$-space $\X$ is then
\[
\TC_{\CC}(\X)=\sec_{\CC}(\pi_{\X}),
\]
where
\[
\pi_{\X}\colon\X^{\mathbb I}\longrightarrow\X\times\X
\]
is the objectwise endpoint evaluation map. Its local sections are
precisely compatible local motion planners. We prove that
$\pi_{\X}$ has the homotopy lifting property, so homotopy local sections
may equivalently be replaced by strict local sections.

The principal upper bounds arise from the interaction between
sectional category and orbit-relative LS-category. For an orbit family
$\mathcal O$, we introduce a corresponding notion of
$\mathcal O$-surjectivity and prove that every
$\mathcal O$-surjective morphism
$p\colon\Ee\to\Bb$ satisfies
\[
\sec_{\CC}(p)
\leq
\cat_{\CC}^{\mathcal O}(\Bb).
\]
For endpoint evaluation, $\mathcal O$-surjectivity is equivalent to the
path-connectivity of the spaces of generalized points
\[
\Hom_{\CC}(\T,\X),
\qquad
\T\in\mathcal O.
\]
Consequently, every $\mathcal O$-path-connected $\CC$-space satisfies
\[
\TC_{\CC}(\X)
\leq
\cat_{\CC}^{\mathcal O}(\X\times\X).
\]
The canonical families thereby yield increasingly strong connectivity
conditions and potentially sharper upper bounds.

The topological complexity of a diagram also admits ordinary lower
bounds. Evaluation at each object gives
\[
\sup_{c\in\Obj(\CC)}
\TC\bigl(\X(c)\bigr)
\leq
\TC_{\CC}(\X).
\]

For finite-group actions, the discrete orbit family consists of the
homogeneous spaces $G/H$, and the corresponding constructions recover
equivariant LS-category, equivariant sectional category and equivariant
topological complexity. 

Under a finiteness condition and the existence of a weakly initial
object, the inverse limit gives a further lower bound. We show that this
hypothesis is essential and that no analogous lower bound holds in
general for the colimit. Finally, we establish change-of-base inequalities for the three types
of invariants.

The paper is organized as follows.
Section~\ref{sec:homotopy-diagrams} recalls the homotopy theory and
topological enrichment of $\CC$-spaces.
Section~\ref{sec:orbits} studies orbit families and canonical orbital
points.
Section~\ref{sec:orbit-relative-category} introduces orbit-relative
LS-category and distinguishes the three canonical variants.
Section~\ref{sec:sectional-category} develops sectional category for
diagrams and its comparison with orbit-relative LS-category.
Section~\ref{sec:topological-complexity} establishes the main results
on diagrammatic topological complexity, including its upper and lower
bounds and the examples concerning limits and colimits.
Finally, Section~\ref{sec:change-of-base} treats change of base and
restriction to subcategories.

Throughout the paper, $\Top$ denotes the category of compactly generated
topological spaces, and mapping spaces are endowed with the compactly
generated compact-open topology. We write $X\simeq Y$ when two spaces
are homotopy equivalent and $X\cong Y$ when they are homeomorphic. All
categories are assumed to be small, and $\Obj(\CC)$ denotes the set of
objects of $\CC$.

\section{Preliminaries on diagrams of spaces}
\label{sec:homotopy-diagrams}
\label{sec:mapping-spaces}

We write $\Top^{\CC}$ for the
category of covariant functors from $\CC$ to $\Top$.

\begin{defi}
A \emph{$\CC$-space} is a functor $\X\colon\CC\to\Top$. A morphism
$f\colon\X\to\Y$ of $\CC$-spaces is a natural transformation and will
also be called a \emph{$\CC$-map}.
\end{defi}

Thus, a $\CC$-map $f\colon\X\to\Y$ consists of continuous maps
$f_c\colon\X(c)\to\Y(c)$ satisfying
$f_d\circ\X(\alpha)=\Y(\alpha)\circ f_c$ for every morphism
$\alpha\colon c\to d$ in $\CC$.

Since $\Top$ is complete and cocomplete, so is $\Top^{\CC}$, and its
limits and colimits are computed objectwise. In particular,
$(\X\times\Y)(c)=\X(c)\times\Y(c)$ and
$(\X\times\Y)(\alpha)=\X(\alpha)\times\Y(\alpha)$.

Every topological space $K$ will also be regarded as a constant
$\CC$-space. This defines the diagonal functor
$\Delta\colon\Top\to\Top^{\CC}$, although we shall usually omit
$\Delta$ from the notation.

\begin{defi}
A $\CC$-space $\X$ is a \emph{$\CC$-subspace} of a $\CC$-space $\Y$
if $\X(c)$ is a subspace of $\Y(c)$ for every $c\in\Obj(\CC)$ and
$\X(\alpha)$ is the restriction of $\Y(\alpha)$ for every morphism
$\alpha$ of $\CC$. The objectwise inclusions then define a $\CC$-map
$\iota\colon\X\hookrightarrow\Y$.
\end{defi}

\subsection{Limits, colimits and homotopies}

The limit and colimit of a $\CC$-space are topological spaces obtained
by taking the corresponding construction over the indexing category.
The diagonal functor occurs in the adjunctions
\[
\varinjlim_{\CC}\dashv\Delta\dashv\varprojlim_{\CC}.
\]
Consequently, for every $\CC$-space $\X$ and every topological space
$K$, there are natural bijections
\[
\Hom_{\Top}\left(\varinjlim_{\CC}\X,K\right)
\cong
\Hom_{\Top^{\CC}}(\X,K),
\]
and
\[
\Hom_{\Top^{\CC}}(K,\X)
\cong
\Hom_{\Top}\left(K,\varprojlim_{\CC}\X\right).
\]

In particular, every $\CC$-map $K\to\X$ from a constant diagram factors
uniquely through the universal cone
$\varprojlim_{\CC}\X\to\X$, while every $\CC$-map $\X\to K$ factors
uniquely through the universal cocone
$\X\to\varinjlim_{\CC}\X$.

Let $\mathbb I=[0,1]$, regarded as a constant $\CC$-space.

\begin{defi}\label{def:C-homotopy}
Let $f,g\colon\X\to\Y$ be $\CC$-maps. A \emph{$\CC$-homotopy} from
$f$ to $g$ is a $\CC$-map
$H\colon\X\times\mathbb I\to\Y$ such that $H_0=f$ and $H_1=g$, where
$H_t=H\circ i_t$ and $i_t(c)(x)=(x,t)$. If such a homotopy exists, we
write $f\simeq_{\CC}g$.
\end{defi}

Equivalently, a $\CC$-homotopy consists of homotopies
$H^c\colon\X(c)\times\mathbb I\to\Y(c)$ satisfying
\[
\Y(\alpha)\circ H^c
=
H^d\circ\bigl(\X(\alpha)\times1_{\mathbb I}\bigr)
\]
for every $\alpha\colon c\to d$. 

Thus, all homotopies considered here are strictly natural.


\begin{defi}
Two $\CC$-spaces $\X$ and $\Y$ are \emph{$\CC$-homotopy equivalent} if
there are $\CC$-maps $f\colon\X\to\Y$ and $g\colon\Y\to\X$ such that
$g\circ f\simeq_{\CC}1_{\X}$ and
$f\circ g\simeq_{\CC}1_{\Y}$. In that case, we write
$\X\simeq_{\CC}\Y$.
\end{defi}

\subsection{Mapping spaces}

The category $\Top^{\CC}$ is naturally enriched over $\Top$.

\begin{defi}\label{def:mapping-space-diagrams}
Let $\X$ and $\Y$ be $\CC$-spaces. The \emph{mapping space}
$\Hom_{\CC}(\X,\Y)$ is the subspace of
\[
\prod_{c\in\Obj(\CC)}
\Hom_{\Top}\bigl(\X(c),\Y(c)\bigr)
\]
formed by the families $(f_c)_{c\in\Obj(\CC)}$ satisfying
$\Y(\alpha)\circ f_c=f_d\circ\X(\alpha)$ for every
$\alpha\colon c\to d$.

Products, subspaces and mapping spaces are taken in the category of
compactly generated spaces.
\end{defi}

Composition is continuous and defines a bifunctor
\[
\Hom_{\CC}(-,-)\colon
(\Top^{\CC})^{\op}\times\Top^{\CC}\longrightarrow\Top.
\]

For a topological space $K$ and a $\CC$-space $\X$, let $\X^K$ denote
the $\CC$-space given by
\[
\X^K(c)=\Hom_{\Top}(K,\X(c)),
\qquad
\X^K(\alpha)(u)=\X(\alpha)\circ u.
\]
We also write $\X\times K$ for the product of $\X$ with the constant
diagram $K$.

\begin{prop}\label{prop:adjoint}
For $\CC$-spaces $\X,\Y$ and a topological space $K$, there are natural
homeomorphisms
\[
\Hom_{\CC}(\X,\Y^K)
\cong
\Hom_{\CC}(\X\times K,\Y)
\cong
\Hom_{\Top}\bigl(K,\Hom_{\CC}(\X,\Y)\bigr).
\]
\end{prop}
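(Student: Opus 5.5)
The plan is to reduce everything to the exponential law in compactly generated spaces, applied objectwise, and then check that the naturality conditions cut out corresponding subspaces on each side. In $\Top$ (compactly generated, with the compactly generated compact-open topology) we have natural homeomorphisms
\[
\Hom_{\Top}\bigl(A,\Hom_{\Top}(K,B)\bigr)\cong\Hom_{\Top}(A\times K,B)\cong\Hom_{\Top}\bigl(K,\Hom_{\Top}(A,B)\bigr),
\]
given by $u\mapsto\bigl((a,k)\mapsto u(a)(k)\bigr)$ and $(a,k)\mapsto v(k)(a)$ respectively. Taking products over $c\in\Obj(\CC)$ yields homeomorphisms
\[
\prod_{c}\Hom_{\Top}\bigl(\X(c),\Y(c)^K\bigr)\cong\prod_c\Hom_{\Top}\bigl(\X(c)\times K,\Y(c)\bigr)\cong\prod_c\Hom_{\Top}\bigl(K,\Hom_{\Top}(\X(c),\Y(c))\bigr),
\]
and the last term is $\Hom_{\Top}\bigl(K,\prod_c\Hom_{\Top}(\X(c),\Y(c))\bigr)$ because $\Hom_{\Top}(K,-)$ preserves products.

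Next, I will check that under these homeomorphisms the three naturality subspaces correspond. A family $(u_c)$ lies in $\Hom_{\CC}(\X,\Y^K)$ exactly when $\Y(\alpha)\circ u_c(x)=u_d(\X(\alpha)x)$ for all $x$ and $\alpha\colon c\to d$. Evaluating at $k$, this is the condition $\Y(\alpha)\circ\tilde u_c=\tilde u_d\circ(\X(\alpha)\times 1_K)$ on the adjoints, i.e. that $(\tilde u_c)$ lies in $\Hom_{\CC}(\X\times K,\Y)$. Likewise, it is equivalent to requiring that for each $k$ the family $(\tilde u_c(-,k))_c$ satisfies the naturality equations, i.e. that the adjoint map $K\to\prod_c\Hom_{\Top}(\X(c),\Y(c))$ lands in $\Hom_{\CC}(\X,\Y)$. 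Since $\Hom_{\CC}(\X,\Y)$ carries the subspace topology (taken in compactly generated spaces), a map from $K$ into it is the same as a map into the product landing in it. So we get a bijection
\[
\Hom_{\Top}\bigl(K,\Hom_{\CC}(\X,\Y)\bigr)\subseteq\Hom_{\Top}\Bigl(K,\prod_c\Hom_{\Top}(\X(c),\Y(c))\Bigr).
\]

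The main obstacle is topological. Restricting a homeomorphism to corresponding subsets gives a homeomorphism of subspaces, but I must make sure that $\Hom_{\Top}(K,\Hom_{\CC}(\X,\Y))$, with its own compact-open topology, agrees with the subspace topology inherited from $\Hom_{\Top}(K,\prod_c\cdots)$. I would handle this by noting that each naturality condition is an equalizer of two continuous maps. For instance, $\Hom_{\CC}(\X,\Y)$ is the equalizer of
\[
\prod_c\Hom_{\Top}(\X(c),\Y(c))\rightrightarrows\prod_{\alpha\colon c\to d}\Hom_{\Top}(\X(c),\Y(d)),
\]
and this equalizer is a closed subspace when the $\Y(d)$ are weak Hausdorff; in general it is still an equalizer in compactly generated spaces. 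Since $\Hom_{\Top}(K,-)$ is a right adjoint in compactly generated spaces, it preserves equalizers. Hence $\Hom_{\Top}(K,\Hom_{\CC}(\X,\Y))$ is the equalizer of the induced pair, which is exactly the corresponding subspace. The same equalizer description applies to $\Hom_{\CC}(\X,\Y^K)$ and $\Hom_{\CC}(\X\times K,\Y)$, and the exponential homeomorphisms commute with the parallel pairs, so they induce homeomorphisms of equalizers.

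Finally, naturality in $\X$, $\Y$ and $K$ follows from the naturality of the objectwise exponential law, since all the maps involved are composition or precomposition, which commute with currying.
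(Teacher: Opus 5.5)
Your proposal is correct and follows the paper's approach: apply the exponential law objectwise and restrict to the subspaces of natural transformations. The only difference is in the second homeomorphism. There you check the topology on $\Hom_{\Top}(K,\Hom_{\CC}(\X,\Y))$ directly, by describing $\Hom_{\CC}(\X,\Y)$ as an equalizer and using that $\Hom_{\Top}(K,-)$ preserves limits in compactly generated spaces, whereas the paper simply cites the enriched tensor--hom adjunction for $\Top^{\CC}$.
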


\begin{proof}
The ordinary exponential homeomorphisms
\[
\Hom_{\Top}\bigl(\X(c),\Hom_{\Top}(K,\Y(c))\bigr)
\cong
\Hom_{\Top}\bigl(\X(c)\times K,\Y(c)\bigr)
\]
are natural in $c$ and restrict to the subspaces of natural
transformations. The second homeomorphism is the corresponding
tensor--hom adjunction for the enriched functor category
$\Top^{\CC}$.
\end{proof}

The preceding adjunction identifies the inverse limit of a $\CC$-space
with a mapping space:
\[
\Hom_{\CC}(*,\X)\cong\varprojlim_{\CC}\X.
\]
Explicitly, both spaces consist of compatible families
$(x_c)_{c\in\Obj(\CC)}$ satisfying
$\X(\alpha)(x_c)=x_d$ for every morphism $\alpha\colon c\to d$. If
$\CC=G$ is a group regarded as a one-object category, this is the
fixed-point space of the corresponding $G$-action.

Mapping spaces also preserve products in the second variable. More
precisely, there is a natural homeomorphism
\begin{equation}\label{eq:mapping-space-product}
\Hom_{\CC}(\X,\Y\times\Z)
\cong
\Hom_{\CC}(\X,\Y)\times\Hom_{\CC}(\X,\Z),
\end{equation}
induced by the two projections.

\subsection{Compatibility with homotopies}

\begin{prop}
\label{prop:Hom_preserves_homotopies}
\label{cor:limit-preserves-homotopies}
\label{pro:colimit_preserves_homotopies}
Let $f,g\colon\X\to\Y$ be $\CC$-homotopic.
\begin{enumerate}
    \item For every $\CC$-space $\Aa$, the induced maps
    $\Hom_{\CC}(\Aa,f)$ and $\Hom_{\CC}(\Aa,g)$ are homotopic.
    Consequently, $\X\simeq_{\CC}\Y$ implies
    $\Hom_{\CC}(\Aa,\X)\simeq\Hom_{\CC}(\Aa,\Y)$.

    \item The maps $\varprojlim_{\CC}f$ and
    $\varprojlim_{\CC}g$ are homotopic.

    \item The maps $\varinjlim_{\CC}f$ and
    $\varinjlim_{\CC}g$ are homotopic.
\end{enumerate}
In particular, both limits and colimits preserve
$\CC$-homotopy equivalences.
\end{prop}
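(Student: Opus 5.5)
Fix a $\CC$-homotopy $H\colon\X\times\mathbb I\to\Y$ from $f$ to $g$. For part (1) I will use Proposition~\ref{prop:adjoint} together with the continuity of composition. Define
\[
\widetilde H\colon\Hom_{\CC}(\Aa,\X)\times\mathbb I\longrightarrow\Hom_{\CC}(\Aa,\Y),\qquad
\widetilde H(u,t)=H\circ(u\times 1_{\mathbb I})\circ i_t ,
\]
so that componentwise $\widetilde H(u,t)_c(a)=H^c(u_c(a),t)$. The naturality of $u$ and of $H$ shows that each $\widetilde H(u,t)$ is again a $\CC$-map. Moreover, $\widetilde H(u,0)=f\circ u$ and $\widetilde H(u,1)=g\circ u$.

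The only point needing care is continuity. I would argue as follows.
\begin{enumerate}
\item By Proposition~\ref{prop:adjoint}, $H$ corresponds to a continuous map $\mathbb I\to\Hom_{\CC}(\X,\Y)$, namely $t\mapsto H_t$.
\item Composition $\Hom_{\CC}(\X,\Y)\times\Hom_{\CC}(\Aa,\X)\to\Hom_{\CC}(\Aa,\Y)$ is continuous, as stated after Definition~\ref{def:mapping-space-diagrams}.
\item Hence $(u,t)\mapsto H_t\circ u$ is continuous, since it is the composite of $1\times(t\mapsto H_t)$ with composition.
\end{enumerate}
Products are taken in compactly generated spaces, which is exactly the setting in which composition is continuous. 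This gives $\Hom_{\CC}(\Aa,f)\simeq\Hom_{\CC}(\Aa,g)$. The consequence about homotopy equivalences follows from functoriality: $\Hom_{\CC}(\Aa,-)$ sends the composites $g'\circ f'\simeq_{\CC}1$ and $f'\circ g'\simeq_{\CC}1$ to maps homotopic to identities.

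Part (2) follows from part (1) with $\Aa=*$, via the identification $\Hom_{\CC}(*,\X)\cong\varprojlim_{\CC}\X$. This identification is natural in $\X$, so it carries $\Hom_{\CC}(*,f)$ to $\varprojlim_{\CC}f$.

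For part (3) I will use that $\varinjlim_{\CC}$ commutes with $-\times\mathbb I$. The functor $-\times\mathbb I$ is a left adjoint in compactly generated spaces, with right adjoint $(-)^{\mathbb I}$, so it preserves colimits and
\[
\varinjlim_{\CC}(\X\times\mathbb I)\cong\bigl(\varinjlim_{\CC}\X\bigr)\times\mathbb I .
\]
Applying $\varinjlim_{\CC}$ to $H$ then gives a map $(\varinjlim_{\CC}\X)\times\mathbb I\to\varinjlim_{\CC}\Y$. By functoriality, its restrictions to $t=0$ and $t=1$ are $\varinjlim_{\CC}f$ and $\varinjlim_{\CC}g$, because $\varinjlim_{\CC}(H\circ i_t)=\varinjlim_{\CC}H\circ\varinjlim_{\CC}i_t$ and $\varinjlim_{\CC}i_t$ is the inclusion at level $t$.

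I expect the main obstacle to be this commutation of the colimit with $\times\mathbb I$. It is exactly where the compactly generated convention is needed. The final claim about homotopy equivalences follows from parts (2) and (3) by functoriality, as in part (1).
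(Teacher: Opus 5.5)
Your proposal is correct and follows the paper's proof step for step. Part (1) uses the homotopy $(u,t)\mapsto H_t\circ u$, with continuity coming from enriched composition; your appeal to Proposition~\ref{prop:adjoint} to see that $t\mapsto H_t$ is continuous simply makes explicit what the paper leaves implicit. Part (2) specializes to $\Aa=*$, and part (3) commutes $\varinjlim_{\CC}$ with the left adjoint $-\times\mathbb I$.

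One overstatement is harmless. Part (3) does not hinge on the compactly generated convention, since $\mathbb I$ is locally compact Hausdorff and $-\times\mathbb I$ is already a left adjoint in ordinary spaces.
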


\begin{proof}
Let $H\colon\X\times\mathbb I\to\Y$ be a $\CC$-homotopy from $f$ to
$g$. For the first assertion, define
$\overline H(m,t)=H_t\circ m$. Continuity follows from enriched
composition, and the endpoint maps are
$\Hom_{\CC}(\Aa,f)$ and $\Hom_{\CC}(\Aa,g)$.

The second assertion follows by taking $\Aa=*$ and using
$\Hom_{\CC}(*,\X)\cong\varprojlim_{\CC}\X$.

Finally, since $-\times\mathbb I$ is a left adjoint in $\Top$, it
preserves colimits. Hence
\[
\varinjlim_{\CC}(\X\times\mathbb I)
\cong
\left(\varinjlim_{\CC}\X\right)\times\mathbb I,
\]
and applying $\varinjlim_{\CC}$ to $H$ gives a homotopy from
$\varinjlim_{\CC}f$ to $\varinjlim_{\CC}g$.
\end{proof}
\section{Orbit families}
\label{sec:orbits}

A point of a topological space $X$ can be identified with a map
$*\to X$. This observation underlies several elementary notions: a map is
surjective when every point of its codomain lifts, and a space is
path-connected when any two of its points can be joined by a path.

For a $\CC$-space, the constant one-point diagram is generally too
restrictive to play the role of all possible points. Following the orbit
approach to the homotopy theory of diagrams
\cite{FarjounBig,Farjoun_Hom,Farjou_Zabrodsky,Housden}, we regard a
$\CC$-map $O\colon\T\to\X$ from a suitable $\CC$-space $\T$ as a
generalized point of $\X$ of shape $\T$.

\begin{defi}
A \emph{topological $\CC$-orbit} is a $\CC$-space
$\T\colon\CC\to\Top$ such that
$\varinjlim_{\CC}\T\cong *$. We denote by
$\mathcal O_{\mathrm{top}}(\CC)$ the class of all topological
$\CC$-orbits.
\end{defi}

Thus, although an orbit may contain many points at its different values,
all these points determine the same element after passing to the colimit.
This generalizes the transitivity condition for group actions.

Different collections of orbits may detect different features of a
diagram, as already occurs in orbit-based homotopy theories
\cite{Farjoun_Hom,Housden}. We shall therefore work relative to a chosen
family.

\begin{defi}
An \emph{orbit family} for $\CC$ is a subclass
$\mathcal O\subseteq\mathcal O_{\mathrm{top}}(\CC)$ that is closed
under isomorphisms and contains every representable $\CC$-space
$\CC(c,-)$, regarded as an objectwise discrete $\CC$-space.
\end{defi}

The representables ensure that an orbit family detects objectwise
information, since the enriched Yoneda lemma gives
$\Hom_{\CC}(\CC(c,-),\X)\cong\X(c)$.

\subsection{Three canonical families}

A \emph{$\CC$-set} is a functor $S\colon\CC\to\Set$. For more information about $\CC$-set see \cite{Webb2023,CalderonEtAl2026}. We regard every
$\CC$-set as a $\CC$-space by equipping each $S(c)$ with the discrete
topology.

\begin{defi}\label{def:canonical-orbit-families}
We consider the following three orbit families:
\begin{enumerate}
    \item the \emph{representable family}
    \[
    \mathcal O_{\mathrm{rep}}(\CC)
    =
    \{\CC(c,-)\mid c\in\Obj(\CC)\},
    \]
    closed under isomorphisms;

    \item the \emph{discrete family}
    \[
    \mathcal O_{\mathrm{disc}}(\CC)
    =
    \left\{
    S\colon\CC\to\Set
    \ \middle|\
    \varinjlim_{\CC}^{\Set}S\cong *
    \right\},
    \]
    where its elements are regarded as discrete $\CC$-spaces;

    \item the \emph{topological family}
    \[
    \mathcal O_{\mathrm{top}}(\CC)
    =
    \left\{
    \T\colon\CC\to\Top
    \ \middle|\
    \varinjlim_{\CC}^{\Top}\T\cong *
    \right\}.
    \]
\end{enumerate}
\end{defi}

Every representable is an orbit. Indeed, for each
$f\colon c\to d$, the elements $1_c\in\CC(c,c)$ and
$f\in\CC(c,d)$ have the same image in the colimit because
$\CC(c,f)(1_c)=f$. Hence
$\varinjlim_{\CC}\CC(c,-)\cong *$.

The colimit of a $\CC$-set $S$ is the quotient of
$\coprod_{c\in\Obj(\CC)}S(c)$ by the equivalence relation generated by
$s\sim S(\alpha)(s)$. Therefore, $S$ is a discrete orbit precisely when
it is nonempty and all its elements belong to a single equivalence class.

\begin{prop}\label{prop:discrete-orbit-indecomposable}
Let $S\colon\CC\to\Set$ be a nonempty $\CC$-set. The following
conditions are equivalent:
\begin{enumerate}
    \item $S$ is a discrete $\CC$-orbit;
    \item $\varinjlim_{\CC}^{\Set}S\cong *$;
    \item $S$ is indecomposable in $\Set^{\CC}$;
    \item the Grothendieck construction $\int_{\CC}S$ is connected.
\end{enumerate}
\end{prop}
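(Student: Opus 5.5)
The plan is to prove the cycle of equivalences by a single mechanism: all four conditions are statements about the equivalence relation on $\coprod_{c}S(c)$ generated by $s\sim S(\alpha)(s)$. The equivalence (1)$\Leftrightarrow$(2) is immediate from Definition~\ref{def:canonical-orbit-families}, since a discrete orbit is by definition a $\CC$-set whose $\Set$-colimit is a point. For (2)$\Leftrightarrow$(4), I will use the description of the Grothendieck construction $\int_{\CC}S$. Its objects are pairs $(c,s)$ with $s\in S(c)$. Its morphisms $(c,s)\to(d,t)$ are the arrows $\alpha\colon c\to d$ with $S(\alpha)(s)=t$. Two objects lie in the same connected component exactly when they are joined by a zigzag of such morphisms. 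That is precisely the equivalence relation generated by $s\sim S(\alpha)(s)$. Hence the set of connected components of $\int_{\CC}S$ is in bijection with $\varinjlim^{\Set}_{\CC}S$, using the explicit quotient description of the colimit recalled just before the statement. Since $S$ is nonempty, the category $\int_\CC S$ is nonempty, so it is connected iff the colimit is a single point.

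For (2)$\Leftrightarrow$(3), I read ``indecomposable'' as: $S$ is not initial (that is, not empty), and whenever $S\cong S_1\sqcup S_2$ in $\Set^{\CC}$ one of the summands is empty. First suppose the colimit has at least two points. Choose an equivalence class $A\subseteq\coprod_c S(c)$ that is a proper nonempty subset. Define $S_1(c)=S(c)\cap A$ and $S_2(c)=S(c)\setminus A$. Both are subfunctors, because each $S(\alpha)$ maps an element to an equivalent one, so each class is closed under the structure maps. Since coproducts in $\Set^\CC$ are computed objectwise, $S=S_1\sqcup S_2$ is a nontrivial decomposition. Conversely, suppose $S\cong S_1\sqcup S_2$. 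The colimit is a left adjoint, so it preserves coproducts, and therefore $\varinjlim S\cong\varinjlim S_1\sqcup\varinjlim S_2$. A nonempty $\CC$-set has a nonempty colimit, because the quotient of a nonempty set is nonempty. So if the colimit is a point, one summand has empty colimit and is therefore empty.

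The only delicate point, and the step I expect to need the most care, is matching the zigzag relation with connected components of $\int_\CC S$ and with the subfunctor decomposition. In particular I must check that an equivalence class is closed under the structure maps in both directions as a subset. It is: membership is preserved by applying $S(\alpha)$ by the definition of the relation, and complements of unions of classes are again unions of classes. Everything else is formal.
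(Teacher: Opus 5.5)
Your proof is correct and follows the same mechanism as the paper: indecomposable summands of $S$, connected components of $\int_{\CC}S$, and points of $\varinjlim_{\CC}^{\Set}S$ are all identified with classes of the equivalence relation generated by $s\sim S(\alpha)(s)$. The only difference is that you prove the decomposition step directly, by splitting along a class and using that $\varinjlim_{\CC}$ preserves coproducts for the converse, whereas the paper simply cites the standard decomposition of a $\CC$-set into its indecomposable components.
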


\begin{proof}
This follows from the standard decomposition of a $\CC$-set into its
indecomposable components; see \cite{Webb2023}. These
components correspond to the connected components of
$\int_{\CC}S$, and hence to the elements of
$\varinjlim_{\CC}^{\Set}S$.
\end{proof}

Thus, discrete $\CC$-orbits are the categorical analogues of transitive
$G$-sets.

Since every representable orbit is discrete and every
discrete orbit is topological, we have
\[
\mathcal O_{\mathrm{rep}}(\CC)
\subseteq
\mathcal O_{\mathrm{disc}}(\CC)
\subseteq
\mathcal O_{\mathrm{top}}(\CC).
\]

The three families provide increasingly general notions of points.
Representable points are ordinary points of individual spaces $\X(c)$;
discrete orbital points are compatible indecomposable systems of points;
and topological orbital points may themselves carry nontrivial topology.

When $\CC=G$ is a discrete group, the discrete orbits are precisely the
transitive $G$-sets $G/H$, while the unique representable orbit is the
regular $G$-set $G$. We return to the equivariant interpretation in
Section~\ref{subsec:equivariant-case}.

\subsection{Examples}

The full class of topological orbits can be large, even for a very simple
indexing category.

\begin{ex}\label{ex:interval}
Let $\mathcal I$ be the category $0\to1$. Since $1$ is terminal,
$\varinjlim_{\mathcal I}\X\cong\X(1)$ for every $\mathcal I$-space
$\X$. Hence the topological $\mathcal I$-orbits are precisely the
diagrams $X\to *$, with $X$ arbitrary. Thus,
$\mathcal O_{\mathrm{top}}(\mathcal I)$ contains a copy of the class of
all topological spaces and need not be a set. This is an instance of the
size phenomenon studied in \cite{Chorny2003}. If $X$ is not discrete we have a non-discrete orbit, so the
inclusion
$\mathcal O_{\mathrm{disc}}(\mathcal I)
\subseteq\mathcal O_{\mathrm{top}}(\mathcal I)$
is strict.
\end{ex}

\begin{rem}
The constant one-point $\CC$-space is an orbit if and only if $\CC$ is
nonempty and connected, since
$\varinjlim_{\CC}*\cong\pi_0(\CC)$ as a discrete space.
\end{rem}

The values of a topological orbit need not be contractible.

\begin{ex}\label{ex:circunference_Category_orbit}
Let $\mathcal S$ be generated by two parallel morphisms
\[
\begin{tikzcd}
a
  \arrow[r,"f_1",bend left]
  \arrow[r,"f_2"',bend right]
&
b,
\end{tikzcd}
\]
and consider the $\mathcal S$-space
\[
\begin{tikzcd}
\mathbb S^1
  \arrow[r,"1_{\mathbb S^1}",bend left]
  \arrow[r,"{(1,0)}"',bend right]
&
\mathbb S^1,
\end{tikzcd}
\]
where $(1,0)$ denotes the constant map with value $(1,0)$. Its colimit
is the coequalizer of the identity and a constant map, and is therefore
a point. The diagram is consequently a topological orbit, although both
of its values are noncontractible. 
\end{ex}

\subsection{Orbital points and canonical orbits}

\begin{defi}
Let $\mathcal O$ be an orbit family and let $\X$ be a $\CC$-space. An
\emph{$\mathcal O$-point of $\X$} is a $\CC$-map
$O\colon\T\to\X$ with $\T\in\mathcal O$. For
$\mathcal O=\mathcal O_{\mathrm{top}}(\CC)$, we simply call $O$ an
\emph{$\X$-orbit}.
\end{defi}

For a fixed shape $\T$, the mapping space $\Hom_{\CC}(\T,\X)$ is the
space of generalized points of $\X$ of shape $\T$. This interpretation
will later be used to define orbit-relative surjectivity and
path-connectivity.

Although $\mathcal O_{\mathrm{top}}(\CC)$ may be a proper class, the
orbits mapping to a fixed $\CC$-space factor through a collection indexed
by the points of its colimit.

\begin{defi}\label{def:canonical-topological-orbit}
Let $\X$ be a $\CC$-space and let
$x\in\varinjlim_{\CC}\X$. We define $T_x^{\mathrm{top}}$ as the pullback
\[
\begin{tikzcd}
\T_x^{\mathrm{top}}
  \arrow[r,"O_x^{\mathrm{top}}"]
  \arrow[d]
&
\X
  \arrow[d,"\pi_{\X}"]
\\
*
  \arrow[r,"x"']
&
\varinjlim_{\CC}\X.
\end{tikzcd}
\]
Thus, $\T_x^{\mathrm{top}}(c)$ is the subspace
$\{z\in\X(c)\mid\pi_{\X,c}(z)=x\}$ of $\X(c)$.
\end{defi}

\begin{lemma}\label{lem:canonical-topological-orbit}
For every $x\in\varinjlim_{\CC}\X$, the $\CC$-space
$\T_x^{\mathrm{top}}$ is a topological orbit.
\end{lemma}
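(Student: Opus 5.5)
The plan is to compute $\varinjlim_{\CC}\T_x^{\mathrm{top}}$ directly: show that its underlying set is a single point, and then observe that any topology on a one-point set is the point topology. First I will check that $\T_x^{\mathrm{top}}$ really is a $\CC$-subspace of $\X$. Let $\pi_{\X,c}\colon\X(c)\to\varinjlim_{\CC}\X$ be the cocone components. Since $\pi_{\X,d}\circ\X(\alpha)=\pi_{\X,c}$ for every $\alpha\colon c\to d$, the map $\X(\alpha)$ carries $\T_x^{\mathrm{top}}(c)$ into $\T_x^{\mathrm{top}}(d)$. Thus the pullback in $\Top^{\CC}$, computed objectwise, is the $\CC$-subspace described in Definition~\ref{def:canonical-topological-orbit}, with the restricted structure maps.

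Next I will use that the underlying set of a colimit in $\Top$ (compactly generated spaces) is the colimit of the underlying sets. This holds because the quotient topology on $\coprod_c \X(c)/\!\sim$ and its $k$-ification do not change the underlying set. Concretely, $\varinjlim_{\CC}\X$ is the set $\coprod_{c}\X(c)$ modulo the equivalence relation generated by $z\sim\X(\alpha)(z)$, and $\pi_{\X}$ is the quotient map. The same description applies to $\T_x^{\mathrm{top}}$. I expect this identification of underlying sets, together with the care needed about the compactly generated setting, to be the only point requiring real attention; everything else is elementary.

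For nonemptiness, the map $\coprod_c\X(c)\to\varinjlim_{\CC}\X$ is surjective, so there is some $c$ and some $z\in\X(c)$ with $\pi_{\X,c}(z)=x$. Hence $\T_x^{\mathrm{top}}(c)\neq\emptyset$, and the colimit of $\T_x^{\mathrm{top}}$ is nonempty.

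For uniqueness, take $z\in\T_x^{\mathrm{top}}(c)$ and $z'\in\T_x^{\mathrm{top}}(d)$. Both have image $x$ in $\varinjlim_{\CC}\X$, so there is a zigzag $z=z_0,z_1,\dots,z_n=z'$ in $\coprod_e\X(e)$ in which consecutive terms are related by some $\X(\alpha)$ in one direction or the other. Related elements have the same image under $\pi_{\X}$, so every $z_i$ maps to $x$ and therefore lies in $\T_x^{\mathrm{top}}$. Since the structure maps of $\T_x^{\mathrm{top}}$ are restrictions of those of $\X$, the same zigzag witnesses $z\sim z'$ inside $\T_x^{\mathrm{top}}$.

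It follows that $\varinjlim_{\CC}\T_x^{\mathrm{top}}$ is a one-point set, hence homeomorphic to $*$. Therefore $\T_x^{\mathrm{top}}\in\mathcal O_{\mathrm{top}}(\CC)$.
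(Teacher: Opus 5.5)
Your proof is correct and follows the same route as the paper: the fiber over $x$ is nonempty, and any two representatives of $x$ are joined by the generating equivalence relation, so the colimit of $\T_x^{\mathrm{top}}$ is a single point. You also make explicit two steps the paper's short argument leaves implicit: every intermediate term of the connecting zigzag lies over $x$, hence in $\T_x^{\mathrm{top}}$, and the underlying set of a colimit in $\Top$ is the set-level colimit.
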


\begin{proof}
The values of $\T_x^{\mathrm{top}}$ consist precisely of the
representatives of the class $x$. This class has at least one
representative, and any two such representatives are related by the
equivalence relation generated by the structure maps of $\X$.
Consequently, the colimit of $\T_x^{\mathrm{top}}$ has exactly one
point.
\end{proof}

Hence we can call $\T_x^{\mathrm{top}}$ the \emph{canonical topological orbit of $\X$ over $x$}.

\begin{prop}\label{pro:Orbits_point}
Let $O\colon\T\to\X$ be a topological $\X$-orbit. There are a unique
point $x\in\varinjlim_{\CC}\X$, determined by
$\varinjlim_{\CC}O$, and a unique $\CC$-map
$f\colon\T\to\T_x^{\mathrm{top}}$ such that
$O_x^{\mathrm{top}}\circ f=O$.
\end{prop}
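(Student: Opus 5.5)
The plan is to read off $x$ from the colimit, then use the universal property of the pullback in Definition~\ref{def:canonical-topological-orbit} to produce $f$.

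First, apply $\varinjlim_{\CC}$ to $O$. This gives a map $\varinjlim_{\CC}O\colon\varinjlim_{\CC}\T\to\varinjlim_{\CC}\X$. Since $\T$ is a topological orbit, $\varinjlim_{\CC}\T\cong *$, so this map selects a single point $x\in\varinjlim_{\CC}\X$. By naturality of the universal cocones, the square
\[
\pi_{\X}\circ O=\Delta\bigl(\varinjlim_{\CC}O\bigr)\circ\pi_{\T}
\]
commutes. Since $\pi_{\T}$ lands in the constant diagram $*$, this says that $\pi_{\X}\circ O$ is the constant $\CC$-map with value $x$, factoring as $\T\to *\xrightarrow{x}\varinjlim_{\CC}\X$.

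Uniqueness of $x$ needs a short argument. Suppose some $x'$ also admits a factorization $O=O_{x'}^{\mathrm{top}}\circ f'$. Then $\pi_{\X}\circ O$ is constant at $x'$. The values of $\T$ cannot all be empty, since otherwise its colimit would be empty rather than a point. So some $\T(c)$ contains a point $t$, and then $x'=\pi_{\X,c}(O_c(t))=x$.

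Next, apply the universal property of the pullback defining $\T_x^{\mathrm{top}}$. Pullbacks in $\Top^{\CC}$ are computed objectwise, so a $\CC$-map $\T\to\T_x^{\mathrm{top}}$ compatible with $O$ is the same thing as a cone $(O,\ \T\to *)$ over the cospan $\X\to\varinjlim_{\CC}\X\leftarrow *$. The previous step shows this cone commutes. Hence there is a unique $f$ with $O_x^{\mathrm{top}}\circ f=O$; the other leg $\T\to *$ is automatically the unique map to the terminal object.

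Concretely, $f_c(t)=O_c(t)$ lands in the fibre $\{z\in\X(c)\mid\pi_{\X,c}(z)=x\}$. It is continuous because $\T_x^{\mathrm{top}}(c)$ carries the subspace topology, taken in the compactly generated sense, so a map into it is continuous exactly when the composite into $\X(c)$ is. It is natural because the structure maps of $\T_x^{\mathrm{top}}$ are restrictions of those of $\X$. It is unique because $O_x^{\mathrm{top}}$ is objectwise injective.

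The main point needing care is the naturality identity $\pi_{\X}\circ O=\Delta(\varinjlim O)\circ\pi_{\T}$, which is just the functoriality of $\varinjlim_{\CC}$ via the adjunction $\varinjlim_{\CC}\dashv\Delta$. Beyond that, the argument is routine.
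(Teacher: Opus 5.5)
Your proposal is correct and follows essentially the same route as the paper. You obtain $x$ from $\varinjlim_{\CC}O$ using $\varinjlim_{\CC}\T\cong *$, observe that the square $\pi_{\X}\circ O = x\circ(\T\to *)$ commutes by naturality of the universal cocone, and conclude by the universal property of the pullback defining $\T_x^{\mathrm{top}}$. Your extra checks (that $x$ is the only point through whose canonical orbit $O$ can factor, because some $\T(c)$ is nonempty, and that the induced objectwise map is continuous in the compactly generated sense and natural) just make explicit details the paper leaves implicit.
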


\begin{proof}
Since $\varinjlim_{\CC}\T\cong *$, applying the colimit functor to $O$
determines a point $x\colon *\to\varinjlim_{\CC}\X$. The square
\[
\begin{tikzcd}
\T
  \arrow[r,"O"]
  \arrow[d]
&
\X
  \arrow[d,"\pi_{\X}"]
\\
*
  \arrow[r,"x"']
&
\varinjlim_{\CC}\X
\end{tikzcd}
\]
commutes, and the result follows from the universal property of the
pullback.
\end{proof}

Consequently, a lifting property against all topological orbital points
of $\X$ can be tested on the maps
$O_x^{\mathrm{top}}\colon\T_x^{\mathrm{top}}\to\X$, with
$x\in\varinjlim_{\CC}\X$.

There is an analogous construction for discrete orbital points.

\begin{defi}\label{def:canonical-discrete-orbit}
Let $U\X\colon\CC\to\Set$ be the underlying $\CC$-set of $\X$ and let
$x\in\varinjlim_{\CC}^{\Set}U\X$. The \emph{canonical discrete orbit of
$\X$ over $x$} is
\[
\T_x^{\mathrm{disc}}
=
U\X\times_{\varinjlim_{\CC}^{\Set}U\X}*.
\]
We regard it as a discrete $\CC$-space and denote its canonical map to
$\X$ by $O_x^{\mathrm{disc}}$. Objectwise,
$\T_x^{\mathrm{disc}}(c)=\{z\in\X(c)\mid[z]=x\}$, equipped with the
discrete topology.
\end{defi}

\begin{prop}\label{prop:discrete-orbits-point}
Every discrete orbital point $O\colon\T\to\X$ factors uniquely through
$O_x^{\mathrm{disc}}\colon\T_x^{\mathrm{disc}}\to\X$ for the point
$x\in\varinjlim_{\CC}^{\Set}U\X$ determined by
$\varinjlim_{\CC}^{\Set}O$.
\end{prop}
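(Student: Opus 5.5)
The plan is to mirror the proof of Proposition~\ref{pro:Orbits_point}, working first in $\Set^{\CC}$ and then transferring back to $\Top^{\CC}$ using discreteness. Let $O\colon\T\to\X$ be a discrete orbital point, so $\T$ is (isomorphic to) a $\CC$-set $S$ with $\varinjlim_{\CC}^{\Set}S\cong *$, carrying the discrete topology. I will apply the forgetful functor $U\colon\Top^{\CC}\to\Set^{\CC}$ to $O$, obtaining a $\CC$-set map $UO\colon S\to U\X$. Applying $\varinjlim_{\CC}^{\Set}$ gives a map $*\cong\varinjlim_{\CC}^{\Set}S\to\varinjlim_{\CC}^{\Set}U\X$, that is, a point $x$. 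By naturality of the colimit cocone, the square with top arrow $UO$, left arrow $S\to *$, right arrow the cocone $U\X\to\varinjlim_{\CC}^{\Set}U\X$, and bottom arrow $x$ commutes.

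Since $\T_x^{\mathrm{disc}}$ is by definition the pullback of this cospan in $\Set^{\CC}$ (computed objectwise), the universal property yields a unique $\CC$-set map $f\colon S\to\T_x^{\mathrm{disc}}$ with $UO_x^{\mathrm{disc}}\circ f=UO$. Concretely, $f_c(s)=O_c(s)$, which lies in $\T_x^{\mathrm{disc}}(c)$ because $[O_c(s)]=x$. It then remains to return to $\Top^{\CC}$. Each $f_c$ is continuous because its source $S(c)$ is discrete. The map $O_x^{\mathrm{disc}}$ is continuous objectwise because $\T_x^{\mathrm{disc}}(c)$ is discrete. Hence $O=O_x^{\mathrm{disc}}\circ f$ holds in $\Top^{\CC}$.

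For uniqueness, observe that any factorization $g$ in $\Top^{\CC}$ gives a factorization $Ug$ in $\Set^{\CC}$, and the components of $O_x^{\mathrm{disc}}$ are injective, being inclusions of subsets. Therefore $g=f$.

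Uniqueness of $x$ also needs a remark: if $O$ factors through $O_y^{\mathrm{disc}}$, then applying $\varinjlim^{\Set}$ shows that the induced point is $y$, since $\varinjlim^{\Set}$ of $O_y^{\mathrm{disc}}$ is the inclusion of $y$. I expect no real obstacle here. The only subtle point is that $x$ must be taken in the $\Set$-colimit of $U\X$ rather than in the topological colimit. This is exactly why we apply $U$ before forming the colimit, and why discreteness is needed to recover continuity.
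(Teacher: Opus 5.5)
Your proposal is correct and follows the paper's proof. The point $x$ comes from applying $\varinjlim_{\CC}^{\Set}$ to the underlying natural transformation of $O$, the unique factorization comes from the universal property of the pullback defining $\T_x^{\mathrm{disc}}$, and continuity follows from objectwise discreteness. Your extra checks simply make explicit what the paper leaves implicit: the continuity of $O_x^{\mathrm{disc}}$ itself, uniqueness in $\Top^{\CC}$ via injectivity of its components, and the uniqueness of $x$.
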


\begin{proof}
Since $\varinjlim_{\CC}^{\Set}\T\cong *$, the underlying natural
transformation of $O$ determines a point
$x\colon *\to\varinjlim_{\CC}^{\Set}U\X$. The universal property of the
defining pullback gives a unique natural transformation
$\T\to\T_x^{\mathrm{disc}}$. It is continuous because both diagrams are
objectwise discrete.
\end{proof}

\section{Orbit-relative LS-category}
\label{sec:orbit-relative-category}

The classical Lusternik--Schnirelmann category measures the minimum
number of open subsets required to cover a space so that each inclusion
is null-homotopic. For a $\CC$-space, we replace the one-point space by
the generalized points introduced in Section~\ref{sec:orbits}. After
fixing an orbit family $\mathcal O$, categorical open subspaces will be
those that can be compressed, inside the ambient diagram, through an
orbit in $\mathcal O$. This is a diagrammatic instance of the category
relative to a class of spaces introduced by Clapp and Puppe
\cite{ClappPuppe1986}.

\begin{defi}\label{def:open-C-subspaces}
Let $\X$ be a $\CC$-space.
\begin{enumerate}
    \item An \emph{open $\CC$-subspace} of $\X$ is a $\CC$-subspace
    $\Uu\subseteq\X$ such that $\Uu(c)$ is open in $\X(c)$ for every
    $c\in\Obj(\CC)$.

    \item A collection $\{\Uu_i\}_{i\in I}$ of open $\CC$-subspaces is
    an \emph{open cover} of $\X$ if
    $\X(c)=\bigcup_{i\in I}\Uu_i(c)$ for every $c\in\Obj(\CC)$.

    \item If $g\colon\X\to\Y$ is a $\CC$-map and
    $\Uu\subseteq\Y$ is open, its inverse image is the open
    $\CC$-subspace defined by
    $g^{-1}(\Uu)(c)=g_c^{-1}(\Uu(c))$.
\end{enumerate}
\end{defi}

The structure maps of an open $\CC$-subspace are the restrictions of
those of the ambient diagram. Similarly, the naturality of $g$ and the
stability of $\Uu$ under the structure maps of $\Y$ ensure that
$g^{-1}(\Uu)$ is a $\CC$-subspace of $\X$.

\begin{lemma}\label{lem:inverse_image_covers}
If $g\colon\X\to\Y$ is a $\CC$-map and
$\{\Uu_i\}_{i\in I}$ is an open cover of $\Y$, then
$\{g^{-1}(\Uu_i)\}_{i\in I}$ is an open cover of $\X$.
\end{lemma}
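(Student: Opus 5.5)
The argument splits into the two conditions in Definition~\ref{def:open-C-subspaces}: first that each $g^{-1}(\Uu_i)$ is an open $\CC$-subspace of $\X$, then that these subspaces cover $\X$ objectwise.

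\emph{Openness and the subspace property.} Fix $i\in I$. For each $c\in\Obj(\CC)$, the set $g^{-1}(\Uu_i)(c)=g_c^{-1}(\Uu_i(c))$ is open in $\X(c)$, because $g_c$ is continuous and $\Uu_i(c)$ is open in $\Y(c)$. Next, let $\alpha\colon c\to d$ and $x\in g_c^{-1}(\Uu_i(c))$. By naturality,
\[
g_d\bigl(\X(\alpha)(x)\bigr)=\Y(\alpha)\bigl(g_c(x)\bigr).
\]
This lies in $\Uu_i(d)$, since $\Uu_i$ is a $\CC$-subspace and so $\Y(\alpha)$ maps $\Uu_i(c)$ into $\Uu_i(d)$. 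Hence $\X(\alpha)$ restricts to a map
\[
g_c^{-1}(\Uu_i(c))\to g_d^{-1}(\Uu_i(d)),
\]
and $g^{-1}(\Uu_i)$ is an open $\CC$-subspace of $\X$. This is the observation already made in the remark after the definition; we are only writing it out.

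\emph{Covering.} Let $c\in\Obj(\CC)$ and $x\in\X(c)$. Since $\{\Uu_i\}_{i\in I}$ covers $\Y$, we have $\Y(c)=\bigcup_i\Uu_i(c)$, so $g_c(x)\in\Uu_i(c)$ for some $i$. Then $x\in g^{-1}(\Uu_i)(c)$, and therefore $\X(c)=\bigcup_i g^{-1}(\Uu_i)(c)$.

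There is no real obstacle here. The only point that needs care is the stability of the preimage under the structure maps, and that follows directly from naturality of $g$.
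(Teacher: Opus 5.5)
Your proof is correct and follows the paper's argument: openness comes from continuity of each $g_c$, and the covering comes from $\bigcup_i g_c^{-1}(\Uu_i(c))=g_c^{-1}(\Y(c))=\X(c)$. The only difference is that you write out the stability of $g^{-1}(\Uu_i)$ under the structure maps inside the proof, whereas the paper records this naturality observation in the remark just before the lemma.
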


\begin{proof}
For every $c\in\Obj(\CC)$, the sets
$g_c^{-1}(\Uu_i(c))$ are open in $\X(c)$ and
\[
\bigcup_{i\in I}g_c^{-1}(\Uu_i(c))
=
g_c^{-1}\left(\bigcup_{i\in I}\Uu_i(c)\right)
=
\X(c).
\]
\end{proof}

\subsection{Categorical open subspaces}

\begin{defi}\label{def:O-categorical-open}
Let $\mathcal O$ be an orbit family and let $\Uu$ be an open
$\CC$-subspace of $\X$. We say that $\Uu$ is
\emph{$\mathcal O$-categorical in $\X$} if there exist
$\T\in\mathcal O$ and $\CC$-maps
$a\colon\Uu\to\T$ and $O\colon\T\to\X$ such that
\[
\iota_{\Uu}\simeq_{\CC}O\circ a,
\]
where $\iota_{\Uu}\colon\Uu\hookrightarrow\X$ is the inclusion.
\end{defi}

Thus, an $\mathcal O$-categorical open subspace can be deformed within
$\X$ to a generalized point whose shape belongs to $\mathcal O$.

\begin{rem}
The factorization through $\T$ is part of the definition. It is not
enough to require that the final map of a homotopy have image contained
in $O(\T)$. This distinction is relevant, for example, when $\T$ is
discrete, since the topology of $\T(c)$ need not coincide with the
subspace topology on the image of $O_c\colon\T(c)\to\X(c)$.
\end{rem}

\begin{defi}\label{def:orbit-relative-category}
Let $\mathcal O$ be an orbit family. The
\emph{$\mathcal O$-Lusternik--Schnirelmann category} of $\X$, denoted
by $\cat_{\CC}^{\mathcal O}(\X)$, is the least integer $n\geq0$ for
which $\X$ admits an open cover
$\{\Uu_0,\ldots,\Uu_n\}$ whose members are
$\mathcal O$-categorical in $\X$. If no such finite cover exists, we
set $\cat_{\CC}^{\mathcal O}(\X)=\infty$.
\end{defi}

We use the reduced convention. Thus,
$\cat_{\CC}^{\mathcal O}(\X)=0$ if and only if
$1_{\X}$ factors, up to $\CC$-homotopy, through a single orbit in
$\mathcal O$.

For the canonical families of Section~\ref{sec:orbits}, we write
\[
\cat_{\CC}^{\mathrm{rep}}(\X),\qquad
\cat_{\CC}^{\mathrm{disc}}(\X),\qquad
\cat_{\CC}^{\mathrm{top}}(\X).
\]
These correspond respectively to compressions through representable,
discrete and arbitrary topological orbits.

If $\CC$ is the terminal category, all three families reduce to the
one-point space and hence
\[
\cat_{\CC}^{\mathrm{rep}}(\X)
=
\cat_{\CC}^{\mathrm{disc}}(\X)
=
\cat_{\CC}^{\mathrm{top}}(\X)
=
\cat(\X).
\]

\subsection{Basic properties}

Enlarging the orbit family provides more possible targets for
categorical compressions.

\begin{prop}\label{prop:monotonicity-orbit-family}
If $\mathcal O\subseteq\mathcal O'$ are orbit families, then
\[
\cat_{\CC}^{\mathcal O'}(\X)
\leq
\cat_{\CC}^{\mathcal O}(\X)
\]
for every $\CC$-space $\X$. In particular,
\[
\cat_{\CC}^{\mathrm{top}}(\X)
\leq
\cat_{\CC}^{\mathrm{disc}}(\X)
\leq
\cat_{\CC}^{\mathrm{rep}}(\X).
\]
\end{prop}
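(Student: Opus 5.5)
The argument is a direct comparison of the two definitions of categorical open subspace. The aim is to show that every open cover witnessing $\cat_{\CC}^{\mathcal O}(\X)\leq n$ also witnesses $\cat_{\CC}^{\mathcal O'}(\X)\leq n$.

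First, let $\Uu\subseteq\X$ be $\mathcal O$-categorical in $\X$. By Definition~\ref{def:O-categorical-open} there are $\T\in\mathcal O$ and $\CC$-maps $a\colon\Uu\to\T$, $O\colon\T\to\X$ with $\iota_{\Uu}\simeq_{\CC}O\circ a$. Since $\mathcal O\subseteq\mathcal O'$, the same $\T$ lies in $\mathcal O'$. The same data $(\T,a,O)$ and the same $\CC$-homotopy therefore show that $\Uu$ is $\mathcal O'$-categorical. Nothing else in the definition depends on the family: the open $\CC$-subspace, the inclusion and the notion of $\CC$-homotopy are all unchanged.

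Second, suppose $\cat_{\CC}^{\mathcal O}(\X)=n<\infty$, witnessed by a cover $\{\Uu_0,\ldots,\Uu_n\}$ of $\mathcal O$-categorical open $\CC$-subspaces. By the first step this is also a cover by $\mathcal O'$-categorical open $\CC$-subspaces, so $\cat_{\CC}^{\mathcal O'}(\X)\leq n$ by Definition~\ref{def:orbit-relative-category}. If $\cat_{\CC}^{\mathcal O}(\X)=\infty$, the inequality holds trivially.

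Finally, the displayed chain follows by applying this to the inclusions $\mathcal O_{\mathrm{rep}}(\CC)\subseteq\mathcal O_{\mathrm{disc}}(\CC)\subseteq\mathcal O_{\mathrm{top}}(\CC)$ established in Section~\ref{sec:orbits}. That these three classes are orbit families is part of Definition~\ref{def:canonical-orbit-families}, together with the observation that representables are orbits. There is no real obstacle here; the only point to check is that being categorical depends on the family only through membership of $\T$.
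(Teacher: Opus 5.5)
Your proposal is correct and is the paper's own argument: an $\mathcal O$-categorical open subspace is $\mathcal O'$-categorical via the same data $(\T,a,O)$, so every $\mathcal O$-categorical cover is an $\mathcal O'$-categorical cover. The chain then follows from $\mathcal O_{\mathrm{rep}}(\CC)\subseteq\mathcal O_{\mathrm{disc}}(\CC)\subseteq\mathcal O_{\mathrm{top}}(\CC)$; your explicit handling of the case $\cat_{\CC}^{\mathcal O}(\X)=\infty$ is a harmless detail the paper leaves implicit.
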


\begin{proof}
Every $\mathcal O$-categorical open subspace is
$\mathcal O'$-categorical. Hence every $\mathcal O$-categorical cover
is also an $\mathcal O'$-categorical cover.
\end{proof}


\begin{prop}\label{prop:category-domination}
Let $f\colon\X\to\Y$ and $g\colon\Y\to\X$ be $\CC$-maps such that
$f\circ g\simeq_{\CC}1_{\Y}$. Then
\[
\cat_{\CC}^{\mathcal O}(\Y)
\leq
\cat_{\CC}^{\mathcal O}(\X).
\]
Consequently, $\CC$-homotopy equivalent diagrams have the same
$\mathcal O$-category.
\end{prop}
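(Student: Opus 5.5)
The plan is the standard pullback-of-covers argument. Suppose $\cat_{\CC}^{\mathcal O}(\X)=n<\infty$; otherwise there is nothing to prove. Fix an open cover $\{\Uu_0,\ldots,\Uu_n\}$ of $\X$ by $\mathcal O$-categorical open $\CC$-subspaces. For each $i$ choose $\T_i\in\mathcal O$, $\CC$-maps $a_i\colon\Uu_i\to\T_i$ and $O_i\colon\T_i\to\X$, and a $\CC$-homotopy $H^i\colon\Uu_i\times\mathbb I\to\X$ from $\iota_{\Uu_i}$ to $O_i\circ a_i$.

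Define $\Vv_i=g^{-1}(\Uu_i)$. By Lemma~\ref{lem:inverse_image_covers}, $\{\Vv_0,\ldots,\Vv_n\}$ is an open cover of $\Y$. Let $g_i\colon\Vv_i\to\Uu_i$ be the restriction of $g$. It is a $\CC$-map, since $\Vv_i$ and $\Uu_i$ are $\CC$-subspaces and $g$ is natural.

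It remains to show that each $\Vv_i$ is $\mathcal O$-categorical in $\Y$, with the same shape $\T_i$. Set $a_i'=a_i\circ g_i\colon\Vv_i\to\T_i$ and $O_i'=f\circ O_i\colon\T_i\to\Y$. We need a $\CC$-homotopy $\iota_{\Vv_i}\simeq_{\CC}O_i'\circ a_i'$, which we build by concatenating two homotopies.
\begin{enumerate}
\item Let $K\colon\Y\times\mathbb I\to\Y$ be a $\CC$-homotopy from $1_{\Y}$ to $f\circ g$. Its restriction $K\circ(\iota_{\Vv_i}\times 1_{\mathbb I})$ is a $\CC$-homotopy from $\iota_{\Vv_i}$ to $f\circ g\circ\iota_{\Vv_i}=f\circ\iota_{\Uu_i}\circ g_i$.
\item The $\CC$-homotopy $f\circ H^i\circ(g_i\times 1_{\mathbb I})$ runs from $f\circ\iota_{\Uu_i}\circ g_i$ to $f\circ O_i\circ a_i\circ g_i=O_i'\circ a_i'$.
\end{enumerate}
The concatenation is a $\CC$-homotopy. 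Because $\mathbb I$ is a constant diagram, the objectwise concatenations are still natural: the formula is the same at each object, so naturality holds on $[0,1/2]$ and on $[1/2,1]$ separately. Continuity at each object follows from the pasting lemma, which is valid in compactly generated spaces since $\Vv_i(c)\times[0,1/2]$ and $\Vv_i(c)\times[1/2,1]$ are closed. Hence $\cat_{\CC}^{\mathcal O}(\Y)\leq n$.

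For the consequence, apply the inequality in both directions to a $\CC$-homotopy equivalence. The only point needing care is the transitivity of $\simeq_{\CC}$ and its compatibility with composition on both sides, which are the routine checks above. I do not expect a genuine obstacle. The key observation is that the shape $\T_i$ is unchanged, so membership in $\mathcal O$ is automatic and no closure property of the orbit family beyond its definition is needed.
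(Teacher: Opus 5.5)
Your proposal is correct and is essentially the paper's proof. You pull the cover back along $g$ to $\V_i=g^{-1}(\Uu_i)$ and use the chain $\iota_i'\simeq_{\CC} f\circ g\circ\iota_i' = f\circ\iota_i\circ g_i\simeq_{\CC} f\circ O_i\circ a_i\circ g_i$, keeping the same orbit shape $\T_i$; your remarks on concatenating natural homotopies only make explicit the transitivity of $\simeq_{\CC}$ that the paper uses tacitly.
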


\begin{proof}
Let $\{\Uu_0,\ldots,\Uu_n\}$ be an $\mathcal O$-categorical open cover
of $\X$. By Lemma~\ref{lem:inverse_image_covers}, the subspaces
$\V_i=g^{-1}(\Uu_i)$ form an open cover of $\Y$.

For each $i$, choose $\T_i\in\mathcal O$ and maps
$a_i\colon\Uu_i\to\T_i$ and $O_i\colon\T_i\to\X$ with
$\iota_i\simeq_{\CC}O_i\circ a_i$. The restriction of $g$ gives a map
$g_i\colon\V_i\to\Uu_i$ satisfying
$\iota_i\circ g_i=g\circ\iota_i'$, where
$\iota_i'\colon\V_i\hookrightarrow\Y$ is the inclusion. Therefore,
\[
\iota_i'
\simeq_{\CC}
f\circ g\circ\iota_i'
=
f\circ\iota_i\circ g_i
\simeq_{\CC}
f\circ O_i\circ a_i\circ g_i.
\]
Hence $\V_i$ is $\mathcal O$-categorical in $\Y$, proving the
inequality. Applying it to homotopy inverses gives homotopy invariance.
\end{proof}

\subsection{Examples}

\begin{ex}\label{ex:category-two-parallel-arrows}
Let $\mathcal S$ be the category of
Example~\ref{ex:circunference_Category_orbit}, generated by two parallel
arrows, and consider the $\mathcal S$-space
\[
\begin{tikzcd}
\{*\}
  \arrow[r,"0",bend left]
  \arrow[r,"1"',bend right]
&
\mathbb I.
\end{tikzcd}
\]
Then
\[
\cat_{\mathcal S}^{\mathrm{rep}}(\X)
=
\cat_{\mathcal S}^{\mathrm{disc}}(\X)
=
\cat_{\mathcal S}^{\mathrm{top}}(\X)
=
1.
\]

Indeed, the colimit of $\X$ is obtained by identifying the endpoints of
$\mathbb I$. The canonical orbit over an interior point has the form
$\emptyset\rightrightarrows\{t\}$, while the orbit over the common class
of the endpoints is
\[
\begin{tikzcd}
\{*\}
  \arrow[r,"0",bend left]
  \arrow[r,"1"',bend right]
&
\{0,1\}.
\end{tikzcd}
\]
These are respectively the representable orbits
$\mathcal S(b,-)$ and $\mathcal S(a,-)$.

We first show that
$\cat_{\mathcal S}^{\mathrm{top}}(\X)>0$. Otherwise, the identity of
$\X$ would factor up to homotopy through a topological orbit. By
Proposition~\ref{pro:Orbits_point}, it would then factor through one of
the two canonical orbits above.

Factoring through an interior orbit is impossible because its value at
$a$ is empty. Factoring through the endpoint orbit would require a
continuous map $\mathbb I\to\{0,1\}$ satisfying
\[
0\longmapsto0,
\qquad
1\longmapsto1,
\]
by naturality with respect to the two arrows of $\mathcal S$. Such a map
cannot exist because $\mathbb I$ is connected. Hence
$\cat_{\mathcal S}^{\mathrm{top}}(\X)>0$.

For the opposite bound, define open $\mathcal S$-subspaces
$\Uu_0,\Uu_1\subseteq\X$ by
\[
\Uu_0(a)=\Uu_1(a)=\{*\},
\]
\[
\Uu_0(b)=[0,0.3)\cup(0.5,1],
\qquad
\Uu_1(b)=[0,0.6)\cup(0.7,1].
\]
They cover $\X$. Each component of $\Uu_i(b)$ contracts to the endpoint
it contains, and these contractions are compatible with the two
structure maps. Thus, each inclusion $\Uu_i\hookrightarrow\X$ is
homotopic to a map factoring through the representable orbit
$\mathcal S(a,-)$. Therefore,
$\cat_{\mathcal S}^{\mathrm{rep}}(\X)\leq1$.

Finally, Proposition~\ref{prop:monotonicity-orbit-family} gives
\[
1
\leq
\cat_{\mathcal S}^{\mathrm{top}}(\X)
\leq
\cat_{\mathcal S}^{\mathrm{disc}}(\X)
\leq
\cat_{\mathcal S}^{\mathrm{rep}}(\X)
\leq1,
\]
so all three invariants equal $1$.
\end{ex}

\begin{ex}\label{ex:three-orbit-categories-distinct}
Let $\mathcal I=(0\to1)$ and consider the $\mathcal I$-space
\[
\X=
\left(
\mathbb S^1\amalg\mathbb S^1
\longrightarrow
*
\right).
\]
Then
\[
\cat_{\mathcal I}^{\mathrm{top}}(\X)
<
\cat_{\mathcal I}^{\mathrm{disc}}(\X)
<
\cat_{\mathcal I}^{\mathrm{rep}}(\X).
\]
More precisely,
\[
\cat_{\mathcal I}^{\mathrm{top}}(\X)=0,
\qquad
\cat_{\mathcal I}^{\mathrm{disc}}(\X)=1,
\qquad
\cat_{\mathcal I}^{\mathrm{rep}}(\X)=3.
\]

Indeed, $\X$ is a topological orbit, so its topological orbit-relative
category is zero. It does not have discrete category zero, since a map
from either circle to a discrete space is constant and neither identity
$1_{\mathbb S^1}$ is null-homotopic. On the other hand, choosing a
categorical cover by two arcs on each circle and grouping one arc from
each component gives a discrete categorical cover with two members.
Hence
\[
\cat_{\mathcal I}^{\mathrm{disc}}(\X)=1.
\]

Finally, a representably categorical open subspace must factor through
the representable orbit $(*\to*)$. Its inclusion is therefore homotopic
to a single constant map and cannot meet both connected components of
$\X(0)$. Since each circle requires at least two categorical open
subsets, every representable categorical cover has at least four
members. Four such members are obtained by taking two categorical arcs
in each circle. Thus,
\[
\cat_{\mathcal I}^{\mathrm{rep}}(\X)=3.
\]
\end{ex}

\section{Sectional category}
\label{sec:sectional-category}

Sectional category provides a common framework for
Lusternik--Schnirelmann category and topological complexity. Its
definition for maps of $\CC$-spaces is formally the classical one, with
open subsets, maps and homotopies required to be natural.

\begin{defi}\label{def:sectional-category-diagram}
Let $p\colon\Ee\to\Bb$ be a morphism of $\CC$-spaces. An open
$\CC$-subspace $\Uu\subseteq\Bb$ is \emph{sectional for $p$} if there
is a $\CC$-map $s\colon\Uu\to\Ee$ such that
$p\circ s\simeq_{\CC}\iota_{\Uu}$.

The \emph{sectional category} of $p$, denoted by $\sec_{\CC}(p)$, is the
least integer $n\geq0$ such that $\Bb$ admits an open cover
$\{\Uu_0,\ldots,\Uu_n\}$ by sectional subspaces. If no such finite cover
exists, we set $\sec_{\CC}(p)=\infty$.
\end{defi}

We use the reduced convention, so $\sec_{\CC}(p)=0$ precisely when $p$
admits a global homotopy section. When $\CC$ is the terminal category,
this definition reduces to the ordinary sectional category
$\sec(p)$ of a map of topological spaces.

\subsection{Orbit-relative surjectivity}

For maps of spaces, surjectivity means that every point of the codomain
lifts. Since points of a diagram are represented by maps from orbits, the
corresponding notion depends on the chosen orbit family.

\begin{defi}\label{def:O-surjective}
Let $\mathcal O$ be an orbit family. A morphism
$p\colon\Ee\to\Bb$ is \emph{$\mathcal O$-surjective} if
\[
\Hom_{\CC}(\T,p)\colon
\Hom_{\CC}(\T,\Ee)\longrightarrow\Hom_{\CC}(\T,\Bb)
\]
is surjective for every $\T\in\mathcal O$. Equivalently, every
$\mathcal O$-point $O\colon\T\to\Bb$ admits a lift
$\widetilde O\colon\T\to\Ee$ with $p\circ\widetilde O=O$.
\end{defi}

If $\mathcal O\subseteq\mathcal O'$, then every
$\mathcal O'$-surjective map is $\mathcal O$-surjective. In particular,
\[
\mathcal O_{\mathrm{top}}\text{-surjective}
\Longrightarrow
\mathcal O_{\mathrm{disc}}\text{-surjective}
\Longrightarrow
\mathcal O_{\mathrm{rep}}\text{-surjective}.
\]

\begin{prop}
\label{prop:surjectivity-monotone-family}
\label{prop:three-surjectivities}
Let $p\colon\Ee\to\Bb$ be a morphism of $\CC$-spaces.
\begin{enumerate}
    \item The map $p$ is
    $\mathcal O_{\mathrm{rep}}(\CC)$-surjective if and only if every
    component $p_c\colon\Ee(c)\to\Bb(c)$ is surjective.

    \item The map $p$ is
    $\mathcal O_{\mathrm{disc}}(\CC)$-surjective if and only if every
    canonical discrete orbital point
    $O_x^{\mathrm{disc}}\colon\T_x^{\mathrm{disc}}\to\Bb$, with
    $x\in\varinjlim_{\CC}^{\Set}U\Bb$, lifts through $p$.

    \item The map $p$ is
    $\mathcal O_{\mathrm{top}}(\CC)$-surjective if and only if every
    canonical topological orbital point
    $O_x^{\mathrm{top}}\colon\T_x^{\mathrm{top}}\to\Bb$, with
    $x\in\varinjlim_{\CC}\Bb$, lifts through $p$.
\end{enumerate}
\end{prop}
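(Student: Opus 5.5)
We prove the three items separately. In each case the key observation is that $\mathcal O$-surjectivity is a lifting condition on $\mathcal O$-points, and that every such point factors through a small, explicit collection of test maps.

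For (1), we use the enriched Yoneda lemma $\Hom_{\CC}(\CC(c,-),\X)\cong\X(c)$, which sends $O$ to $O_c(1_c)$. This identification is natural in $\X$. Under it, $\Hom_{\CC}(\CC(c,-),p)$ corresponds to $p_c\colon\Ee(c)\to\Bb(c)$: indeed $p\circ O$ is sent to $p_c(O_c(1_c))$. The family $\mathcal O_{\mathrm{rep}}(\CC)$ consists of the representables up to isomorphism, and surjectivity of $\Hom_{\CC}(\T,p)$ is invariant under replacing $\T$ by an isomorphic $\CC$-space. Hence $\mathcal O_{\mathrm{rep}}$-surjectivity is equivalent to surjectivity of every $p_c$. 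A small point to check is that the representables are objectwise discrete, so every natural transformation out of them is automatically continuous. This makes the Yoneda bijection valid in $\Top^{\CC}$.

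For (2) and (3), the forward implications are immediate. Each $\T_x^{\mathrm{disc}}$ lies in $\mathcal O_{\mathrm{disc}}(\CC)$: it is a $\CC$-set whose elements form a single equivalence class, so by the description before Proposition~\ref{prop:discrete-orbit-indecomposable} its colimit is a point. Likewise, each $\T_x^{\mathrm{top}}$ lies in $\mathcal O_{\mathrm{top}}(\CC)$ by Lemma~\ref{lem:canonical-topological-orbit}. So $\mathcal O$-surjectivity gives lifts of the canonical orbital points in both cases. For the converse, let $O\colon\T\to\Bb$ be an arbitrary orbital point in the relevant family. Proposition~\ref{prop:discrete-orbits-point} in the discrete case, or Proposition~\ref{pro:Orbits_point} in the topological case, gives a factorization $O=O_x\circ f$ through the canonical orbit over the appropriate point $x$. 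If $\widetilde O_x\colon\T_x\to\Ee$ is a lift of $O_x$, then $\widetilde O_x\circ f$ is a $\CC$-map and satisfies
\[
p\circ\widetilde O_x\circ f=O_x\circ f=O,
\]
so it lifts $O$. This proves the converse.

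The only step needing care is in (2): confirming that $\T_x^{\mathrm{disc}}$ really is a member of the family. It is nonempty because $x$ has a representative. All its elements are identified in $\varinjlim^{\Set}_{\CC}$. It remains to see that the zigzags joining them stay inside $\T_x^{\mathrm{disc}}$, and this holds because the structure maps preserve the class $x$. Continuity of the lift is automatic in the discrete case, since the domain is discrete. In the topological case the factorization is already a $\CC$-map, so no extra continuity argument is needed there.
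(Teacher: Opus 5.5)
Your proposal is correct and follows the paper's own proof. The paper likewise uses the enriched Yoneda identification for (1) and, for (2) and (3), the factorization of an arbitrary orbital point through the canonical orbit (Propositions~\ref{prop:discrete-orbits-point} and \ref{pro:Orbits_point}), composing a lift of $O_x$ with the factoring map. The one thing you add is an explicit check that $\T_x^{\mathrm{disc}}$ belongs to $\mathcal O_{\mathrm{disc}}(\CC)$, which the paper leaves implicit when it says "necessity is immediate."
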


\begin{proof}
The first assertion follows from the enriched Yoneda homeomorphisms
$\Hom_{\CC}(\CC(c,-),\Ee)\cong\Ee(c)$ and
$\Hom_{\CC}(\CC(c,-),\Bb)\cong\Bb(c)$.

For the discrete case, necessity is immediate. Conversely, every
discrete orbital point $O\colon\T\to\Bb$ factors, by
Proposition~\ref{prop:discrete-orbits-point}, as
$O=O_x^{\mathrm{disc}}\circ a$ for some
$a\colon\T\to\T_x^{\mathrm{disc}}$. A lift of
$O_x^{\mathrm{disc}}$ therefore yields a lift of $O$ after composition
with $a$. The topological case follows in the same way from
Proposition~\ref{pro:Orbits_point}.
\end{proof}

Orbit-relative surjectivity is stronger than surjectivity on colimits:
the latter only lifts the class of a point, whereas the former requires a
coherent lift of the corresponding orbital point.

\subsection{Comparison with orbit-relative LS-category}

The next result relates the two invariants.

\begin{thm}
\label{prop:surjective-LS-category}
\label{prop:category-less-than-sectional}
\label{cor:sectional-equals-category}
Let $\mathcal O$ be an orbit family and let
$p\colon\Ee\to\Bb$ be a morphism of $\CC$-spaces.
\begin{enumerate}
    \item If $p$ is $\mathcal O$-surjective, then
    \[
    \sec_{\CC}(p)\leq\cat_{\CC}^{\mathcal O}(\Bb).
    \]

    \item If $\cat_{\CC}^{\mathcal O}(\Ee)=0$, then
    \[
    \cat_{\CC}^{\mathcal O}(\Bb)\leq\sec_{\CC}(p).
    \]
\end{enumerate}
Consequently, if both hypotheses hold, then
$\sec_{\CC}(p)=\cat_{\CC}^{\mathcal O}(\Bb)$.
\end{thm}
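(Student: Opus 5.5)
Both parts follow the classical comparison between sectional category and LS-category, working one open $\CC$-subspace at a time; no global argument is needed beyond reusing the same open cover.

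\emph{Part (1).} Let $\{\Uu_0,\ldots,\Uu_n\}$ be an $\mathcal O$-categorical open cover of $\Bb$ with $n=\cat_{\CC}^{\mathcal O}(\Bb)$. If no finite cover exists, the inequality is trivial. For each $i$, Definition~\ref{def:O-categorical-open} gives $\T_i\in\mathcal O$ and $\CC$-maps $a_i\colon\Uu_i\to\T_i$, $O_i\colon\T_i\to\Bb$ with $\iota_i\simeq_{\CC}O_i\circ a_i$. Since $p$ is $\mathcal O$-surjective (Definition~\ref{def:O-surjective}), the $\mathcal O$-point $O_i$ lifts to some $\widetilde O_i\colon\T_i\to\Ee$ with $p\circ\widetilde O_i=O_i$. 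Put $s_i=\widetilde O_i\circ a_i\colon\Uu_i\to\Ee$. Then
\[
p\circ s_i=O_i\circ a_i\simeq_{\CC}\iota_i ,
\]
so each $\Uu_i$ is sectional for $p$. Hence $\sec_{\CC}(p)\le n$.

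\emph{Part (2).} Since $\cat^{\mathcal O}_{\CC}(\Ee)=0$, there are $\T\in\mathcal O$ and $\CC$-maps $a\colon\Ee\to\T$, $O\colon\T\to\Ee$ with $1_{\Ee}\simeq_{\CC}O\circ a$. Let $\{\Uu_0,\ldots,\Uu_n\}$ be a sectional cover of $\Bb$, with sections $s_i\colon\Uu_i\to\Ee$ satisfying $p\circ s_i\simeq_{\CC}\iota_i$. Then
\[
\iota_i\simeq_{\CC}p\circ s_i=p\circ 1_{\Ee}\circ s_i\simeq_{\CC}(p\circ O)\circ(a\circ s_i).
\]
The second homotopy is obtained by composing a $\CC$-homotopy $\Ee\times\mathbb I\to\Ee$ with $p$ on the left and with $s_i\times 1_{\mathbb I}$ on the right, so it is still natural. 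Thus $\Uu_i$ is $\mathcal O$-categorical in $\Bb$, with orbit $\T$, compression map $a\circ s_i$ and $\mathcal O$-point $p\circ O$. This gives $\cat^{\mathcal O}_{\CC}(\Bb)\le\sec_{\CC}(p)$. The final equality follows by combining (1) and (2).

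The only step needing care is checking that $\CC$-homotopy is compatible with composition on both sides and is transitive, so that the chains of $\simeq_{\CC}$ above are legitimate. Pre- and post-composition of a strictly natural homotopy with $\CC$-maps is again strictly natural. Concatenation of $\CC$-homotopies is natural because $\mathbb I$ is constant: the pasting is done objectwise with the same reparametrization at every object, so naturality squares are preserved. Beyond this, the argument is purely formal, and I expect no genuine obstacle.
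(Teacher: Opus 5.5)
Your proposal is correct and follows essentially the same route as the paper. In part (1) you lift the orbital point through $p$ and precompose with the compression map, and in part (2) you use $1_{\Ee}\simeq_{\CC}O\circ a$ to rewrite $p\circ s_i$ as a compression through $p\circ O$. Your added remark that $\CC$-homotopies compose and concatenate naturally is a check the paper leaves implicit.
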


\begin{proof}
Suppose first that $p$ is $\mathcal O$-surjective and let
$\Uu\subseteq\Bb$ be $\mathcal O$-categorical. Choose
$\T\in\mathcal O$ and maps $a\colon\Uu\to\T$ and
$O\colon\T\to\Bb$ such that
$\iota_{\Uu}\simeq_{\CC}O\circ a$. Since $O$ lifts to a map
$\widetilde O\colon\T\to\Ee$, the composite
$s=\widetilde O\circ a$ satisfies
$p\circ s=O\circ a\simeq_{\CC}\iota_{\Uu}$. Thus, every
$\mathcal O$-categorical open subspace is sectional for $p$.

For the second assertion, choose $\T\in\mathcal O$ and maps
$a\colon\Ee\to\T$ and $O\colon\T\to\Ee$ such that
$1_{\Ee}\simeq_{\CC}O\circ a$. If $\Uu\subseteq\Bb$ is sectional,
choose $s\colon\Uu\to\Ee$ with
$p\circ s\simeq_{\CC}\iota_{\Uu}$. Then
\[
\iota_{\Uu}
\simeq_{\CC}
p\circ s
\simeq_{\CC}
p\circ O\circ a\circ s,
\]
so $\Uu$ is $\mathcal O$-categorical in $\Bb$. Hence every sectional
cover is an $\mathcal O$-categorical cover.
\end{proof}

For the canonical families, the first inequality gives the following
immediate consequences.

\begin{cor}\label{cor:sectional-three-families}
Let $p\colon\Ee\to\Bb$ be a morphism of $\CC$-spaces.
\begin{enumerate}
    \item If $p$ is objectwise surjective, then
    $\sec_{\CC}(p)\leq\cat_{\CC}^{\mathrm{rep}}(\Bb)$.

    \item If $p$ is $\mathcal O_{\mathrm{disc}}(\CC)$-surjective, then
    $\sec_{\CC}(p)\leq\cat_{\CC}^{\mathrm{disc}}(\Bb)$.

    \item If $p$ is $\mathcal O_{\mathrm{top}}(\CC)$-surjective, then
    $\sec_{\CC}(p)\leq\cat_{\CC}^{\mathrm{top}}(\Bb)$.
\end{enumerate}
\end{cor}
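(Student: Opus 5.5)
The plan is to obtain each item as a direct specialization of part~(1) of Theorem~\ref{prop:surjective-LS-category}, taking $\mathcal O$ to be, in turn, $\mathcal O_{\mathrm{rep}}(\CC)$, $\mathcal O_{\mathrm{disc}}(\CC)$ and $\mathcal O_{\mathrm{top}}(\CC)$. Before doing so, I would check that each of these three classes really is an orbit family. Each is closed under isomorphisms by definition, since the representable family is closed under isomorphisms and the other two are described by a colimit condition invariant under isomorphism. Each also contains every representable $\CC(c,-)$, since we showed that every representable is an orbit, and the chain of inclusions $\mathcal O_{\mathrm{rep}}(\CC)\subseteq\mathcal O_{\mathrm{disc}}(\CC)\subseteq\mathcal O_{\mathrm{top}}(\CC)$ holds.

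Items~(2) and~(3) then follow at once. Their hypotheses are literally $\mathcal O$-surjectivity for the corresponding family, and Theorem~\ref{prop:surjective-LS-category}(1) gives $\sec_{\CC}(p)\leq\cat_{\CC}^{\mathcal O}(\Bb)$, which is the claimed bound with the notation $\cat_{\CC}^{\mathrm{disc}}$ and $\cat_{\CC}^{\mathrm{top}}$.

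Item~(1) needs one more step, because its hypothesis is stated as objectwise surjectivity rather than $\mathcal O_{\mathrm{rep}}(\CC)$-surjectivity. Here I would invoke Proposition~\ref{prop:three-surjectivities}(1), which rests on the enriched Yoneda homeomorphisms $\Hom_{\CC}(\CC(c,-),\Ee)\cong\Ee(c)$. That result identifies $\mathcal O_{\mathrm{rep}}(\CC)$-surjectivity with surjectivity of every component $p_c$. One small point to note is that $\mathcal O_{\mathrm{rep}}(\CC)$ also contains diagrams merely isomorphic to representables. Surjectivity of $\Hom_{\CC}(\T,p)$ is invariant under precomposition with an isomorphism $\T\cong\CC(c,-)$, so these cause no problem. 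With that identification, Theorem~\ref{prop:surjective-LS-category}(1) yields $\sec_{\CC}(p)\leq\cat_{\CC}^{\mathrm{rep}}(\Bb)$.

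There is no real obstacle here. The only step that is not pure substitution is this translation of objectwise surjectivity into representable-orbit surjectivity in item~(1), and it is supplied by Proposition~\ref{prop:three-surjectivities}.
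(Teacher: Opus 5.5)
Your proposal is correct and takes the same route as the paper. The paper presents this corollary as an immediate specialization of Theorem~\ref{prop:surjective-LS-category}(1) to the three canonical orbit families. For item (1), objectwise surjectivity is identified with $\mathcal O_{\mathrm{rep}}(\CC)$-surjectivity via Proposition~\ref{prop:three-surjectivities}(1), exactly as you describe.
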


\subsection{Locality and subadditivity}

For an open $\CC$-subspace $\Vv\subseteq\Bb$, let
\[
p|_{\Vv}\colon p^{-1}(\Vv)\longrightarrow\Vv
\]
denote the restriction of $p$.

\begin{prop}\label{prop:sectional-subadditivity}
Let $\{\Vv_0,\ldots,\Vv_r\}$ be a finite open cover of $\Bb$. Then
\[
\sec_{\CC}(p)
\leq
\sum_{\lambda=0}^{r}
\left(\sec_{\CC}(p|_{\Vv_\lambda})+1\right)-1.
\]
Equivalently,
$\sec_{\CC}(p)\leq
\sum_{\lambda=0}^{r}\sec_{\CC}(p|_{\Vv_\lambda})+r$.
\end{prop}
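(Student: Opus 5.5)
The plan is to take, for each $\lambda$, an optimal sectional cover of the restricted map and push it forward to $\Bb$. If some $\sec_{\CC}(p|_{\Vv_\lambda})$ is infinite there is nothing to prove. Otherwise set $n_\lambda=\sec_{\CC}(p|_{\Vv_\lambda})$ and choose an open cover $\{\Uu_{\lambda,0},\ldots,\Uu_{\lambda,n_\lambda}\}$ of $\Vv_\lambda$ by subspaces sectional for $p|_{\Vv_\lambda}$. The resulting family $\{\Uu_{\lambda,j}\}$ has $\sum_\lambda(n_\lambda+1)$ members, which gives exactly the claimed bound. Two things must then be checked: that these subspaces form an open cover of $\Bb$, and that each is sectional for $p$.

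For the first point, note that $\Uu_{\lambda,j}(c)$ is open in $\Vv_\lambda(c)$, which is itself open in $\Bb(c)$, so it is open in $\Bb(c)$. Each $\Uu_{\lambda,j}$ is a $\CC$-subspace of $\Vv_\lambda$, and hence of $\Bb$, since its structure maps are restrictions of those of $\Vv_\lambda$, which are in turn restrictions of those of $\Bb$. Coverage is objectwise: $\Bb(c)=\bigcup_\lambda\Vv_\lambda(c)=\bigcup_{\lambda,j}\Uu_{\lambda,j}(c)$.

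For the second point, let $s\colon\Uu_{\lambda,j}\to p^{-1}(\Vv_\lambda)$ be a $\CC$-map with $p|_{\Vv_\lambda}\circ s\simeq_{\CC}\iota$, where $\iota\colon\Uu_{\lambda,j}\hookrightarrow\Vv_\lambda$ is the inclusion. Compose $s$ with the inclusion $j\colon p^{-1}(\Vv_\lambda)\hookrightarrow\Ee$, and compose the homotopy with the inclusion $k\colon\Vv_\lambda\hookrightarrow\Bb$. Since $p\circ j=k\circ p|_{\Vv_\lambda}$, we get
\[
p\circ j\circ s=k\circ p|_{\Vv_\lambda}\circ s\simeq_{\CC}k\circ\iota=\iota_{\Uu_{\lambda,j}}.
\]
Postcomposing a $\CC$-homotopy with a $\CC$-map yields a $\CC$-homotopy, so this homotopy is natural. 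Hence $\Uu_{\lambda,j}$ is sectional for $p$.

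The argument has no genuine obstacle. The only point needing care is the first check: that a homotopy inside $\Vv_\lambda$ remains a natural homotopy in $\Bb$, and that openness and the $\CC$-subspace structure are transitive. Both follow directly from the definitions.
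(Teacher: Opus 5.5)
Your proposal is correct and follows essentially the same approach as the paper. Both arguments dismiss the infinite case, take a sectional cover of each $\Vv_\lambda$ with $\sec_{\CC}(p|_{\Vv_\lambda})+1$ members, note that its members are open in $\Bb$, and compose the local sections with $p^{-1}(\Vv_\lambda)\hookrightarrow\Ee$ to get a sectional cover of $\Bb$ of the required size. You additionally spell out the transitivity of the $\CC$-subspace structure and the naturality of the pushed-forward homotopy, which the paper leaves implicit.
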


\begin{proof}
If one of the terms on the right is infinite, there is nothing to prove.
Assume therefore that all of them are finite. For each $\lambda$, choose a sectional cover of $\Vv_\lambda$ for
$p|_{\Vv_\lambda}$ with
$\sec_{\CC}(p|_{\Vv_\lambda})+1$ members. Since
$\Vv_\lambda$ is open in $\Bb$, all members of this cover are open in
$\Bb$, and their homotopy local sections become homotopy local sections
of $p$ after composition with
$p^{-1}(\Vv_\lambda)\hookrightarrow\Ee$. The union of these covers is a
sectional cover of $\Bb$ with
$\sum_{\lambda=0}^{r}(\sec_{\CC}(p|_{\Vv_\lambda})+1)$ members.
\end{proof}

Subdiagrams with empty values provide a useful special case.

\begin{defi}
A full subcategory $\DD\subseteq\CC$ is a \emph{cosieve} if
$c\in\Obj(\DD)$ and $c\to d$ in $\CC$ imply
$d\in\Obj(\DD)$.
\end{defi}

For a cosieve $\DD$, define the open $\CC$-subspace
$\Bb_{\DD}\subseteq\Bb$ by
\[
\Bb_{\DD}(c)=
\begin{cases}
\Bb(c),&c\in\Obj(\DD),\\
\varnothing,&c\notin\Obj(\DD).
\end{cases}
\]

\begin{cor}\label{cor:sectional-cosieve-cover}
If $\CC=\DD_0\cup\cdots\cup\DD_r$ is a finite cover by cosieves, then
\[
\sec_{\CC}(p)
\leq
\sum_{\lambda=0}^{r}
\left(
\sec_{\CC}(p|_{\Bb_{\DD_\lambda}})+1
\right)-1.
\]
\end{cor}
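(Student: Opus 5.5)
The plan is to reduce this corollary to Proposition~\ref{prop:sectional-subadditivity}, applied to the family $\{\Bb_{\DD_0},\ldots,\Bb_{\DD_r}\}$. Two things have to be checked: that each $\Bb_{\DD_\lambda}$ is an open $\CC$-subspace of $\Bb$, and that together they form an open cover of $\Bb$. Once both hold, the displayed inequality is exactly the conclusion of that proposition with $\Vv_\lambda=\Bb_{\DD_\lambda}$.

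First I will check that $\Bb_{\DD}$ is a $\CC$-subspace when $\DD$ is a cosieve. Take a morphism $\alpha\colon c\to d$ in $\CC$. We need $\Bb(\alpha)$ to send $\Bb_{\DD}(c)$ into $\Bb_{\DD}(d)$, so that the restriction defines the structure map.
\begin{itemize}
\item If $c\notin\Obj(\DD)$, then $\Bb_{\DD}(c)=\varnothing$ and there is nothing to check.
\item If $c\in\Obj(\DD)$, the cosieve condition gives $d\in\Obj(\DD)$. Hence $\Bb_{\DD}(d)=\Bb(d)$, and $\Bb(\alpha)$ restricts trivially.
\end{itemize}
Functoriality of the restricted maps is inherited from $\Bb$. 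Each value is either $\Bb(c)$ or $\varnothing$, and both are open in $\Bb(c)$.

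Next, the covering condition. Every object $c$ lies in some $\DD_\lambda$, because the $\DD_\lambda$ cover $\CC$ (in particular on objects). For that $\lambda$ we have $\Bb_{\DD_\lambda}(c)=\Bb(c)$. Therefore $\bigcup_\lambda\Bb_{\DD_\lambda}(c)=\Bb(c)$ for every $c$.

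Finally, I will note that the restriction $p|_{\Bb_{\DD_\lambda}}$ appearing in the statement is the map $p^{-1}(\Bb_{\DD_\lambda})\to\Bb_{\DD_\lambda}$ of Proposition~\ref{prop:sectional-subadditivity}. Here $p^{-1}(\Bb_{\DD_\lambda})$ is $\Ee(c)$ at objects $c\in\DD_\lambda$ and empty elsewhere. This identification is immediate from the definitions. The only step with real content is the stability of $\Bb_{\DD}$ under the structure maps, which is exactly where the cosieve hypothesis is used.
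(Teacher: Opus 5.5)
Your proposal is correct and follows the argument the paper intends; the paper gives no separate proof and leaves the corollary as a direct application of Proposition~\ref{prop:sectional-subadditivity} to the cover $\{\Bb_{\DD_0},\ldots,\Bb_{\DD_r}\}$. You also supply the verifications the paper leaves tacit: the cosieve condition makes each $\Bb_{\DD_\lambda}$ stable under the structure maps (hence an open $\CC$-subspace), and the objectwise covering condition holds.
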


\subsection{Evaluation, limits and global sections}

Evaluation at an object always produces an ordinary lower bound.

\begin{prop}
\label{prop:sectional_category_inequality_objectwise}
\label{prop:sectional-zero-functors}
Let $p\colon\Ee\to\Bb$ be a morphism of $\CC$-spaces.
\begin{enumerate}
    \item For every $c\in\Obj(\CC)$,
    \[
    \sec(p_c)\leq\sec_{\CC}(p).
    \]

    \item If $\sec_{\CC}(p)=0$, then
    $\sec(\varinjlim_{\CC}p)=0$ and, for every $\CC$-space $\Aa$,
    \[
    \sec\bigl(\Hom_{\CC}(\Aa,p)\bigr)=0.
    \]
\end{enumerate}
\end{prop}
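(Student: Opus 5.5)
For part (1) the plan is to restrict a sectional cover to the object $c$. Take a sectional cover $\{\Uu_0,\ldots,\Uu_n\}$ of $\Bb$ for $p$ with $n=\sec_{\CC}(p)$; if $\sec_{\CC}(p)=\infty$ there is nothing to prove. By Definition~\ref{def:open-C-subspaces}, the sets $\Uu_i(c)$ are open in $\Bb(c)$ and cover it. For each $i$, choose a $\CC$-map $s_i\colon\Uu_i\to\Ee$ and a $\CC$-homotopy $H_i\colon\Uu_i\times\mathbb I\to\Bb$ from $p\circ s_i$ to $\iota_{\Uu_i}$. Their components at $c$ give a continuous map $(s_i)_c\colon\Uu_i(c)\to\Ee(c)$ and an ordinary homotopy $H_i^c$ from $p_c\circ(s_i)_c$ to the inclusion $\Uu_i(c)\hookrightarrow\Bb(c)$. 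So each $\Uu_i(c)$ is sectional for $p_c$, and $\sec(p_c)\leq n$. Here naturality is simply forgotten. Empty members $\Uu_i(c)=\varnothing$ cause no problem, since they are trivially sectional, or can be discarded.

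For part (2), assume $\sec_{\CC}(p)=0$, so there is a global $\CC$-map $s\colon\Bb\to\Ee$ with $p\circ s\simeq_{\CC}1_{\Bb}$. The idea is to push this homotopy section through functors that preserve composition and $\CC$-homotopies. For the colimit, functoriality gives $\varinjlim_{\CC}p\circ\varinjlim_{\CC}s=\varinjlim_{\CC}(p\circ s)$. By Proposition~\ref{pro:colimit_preserves_homotopies}(3), this map is homotopic to $\varinjlim_{\CC}1_{\Bb}=1_{\varinjlim_{\CC}\Bb}$. Hence $\varinjlim_{\CC}s$ is a global homotopy section, and $\sec(\varinjlim_{\CC}p)=0$.

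For the mapping spaces, fix a $\CC$-space $\Aa$. The bifunctor $\Hom_{\CC}(\Aa,-)$ gives
\[
\Hom_{\CC}(\Aa,p)\circ\Hom_{\CC}(\Aa,s)=\Hom_{\CC}(\Aa,p\circ s).
\]
By Proposition~\ref{prop:Hom_preserves_homotopies}(1), this map is homotopic to $\Hom_{\CC}(\Aa,1_{\Bb})=1$. Thus $\Hom_{\CC}(\Aa,s)$ is a global homotopy section, and $\sec(\Hom_{\CC}(\Aa,p))=0$.

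None of the steps is difficult. The only point needing care is that the preservation results are used in the right form. The $\CC$-homotopy $p\circ s\simeq_{\CC}1_{\Bb}$ must be transported as a homotopy between the induced maps, not merely as a statement about homotopy equivalences. Proposition~\ref{prop:Hom_preserves_homotopies} provides exactly this, and it relies on the continuity of enriched composition and on the commutation of $\varinjlim_{\CC}$ with $-\times\mathbb I$.
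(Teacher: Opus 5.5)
Your proposal is correct and follows the paper's proof. For (1) you evaluate a sectional cover, its local sections and their $\CC$-homotopies at the object $c$; for (2) you transport the global homotopy section through $\varinjlim_{\CC}$ and $\Hom_{\CC}(\Aa,-)$, using that both preserve $\CC$-homotopies (Proposition~\ref{prop:Hom_preserves_homotopies}), and you also spell out the functoriality and the empty and infinite cases that the paper leaves implicit.
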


\begin{proof}
A sectional cover of $\Bb$ evaluates at $c$ to an open cover of
$\Bb(c)$, and every homotopy local section evaluates to one for $p_c$.

For the second assertion, let $s\colon\Bb\to\Ee$ satisfy
$p\circ s\simeq_{\CC}1_{\Bb}$. Applying either
$\varinjlim_{\CC}$ or $\Hom_{\CC}(\Aa,-)$ produces a homotopy section
of the corresponding map, since both functors preserve homotopies.
\end{proof}

To obtain a bound from an inverse limit, we use the following elementary
condition.

\begin{defi}
An object $i\in\Obj(\CC)$ is \emph{weakly initial} if
$\CC(i,c)\neq\varnothing$ for every $c\in\Obj(\CC)$.
\end{defi}

\begin{prop}\label{prop:sectional_category_limits}
Suppose that $\CC$ has finitely many objects and a weakly initial object.
Then
\[
\sec\left(\varprojlim_{\CC}p\right)
\leq
\sec_{\CC}(p).
\]
\end{prop}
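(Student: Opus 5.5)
The plan is to turn a sectional cover of $\Bb$ into a sectional cover of $\varprojlim_{\CC}\Bb$ with the same number of members. We apply the limit functor to the local sections and to the homotopies, and we use the weakly initial object to control which open sets we take. Let $i$ be weakly initial. Let $\{\Uu_0,\ldots,\Uu_n\}$ be a sectional cover of $\Bb$, with $\CC$-maps $s_k\colon\Uu_k\to\Ee$ and $\CC$-homotopies $H_k\colon\Uu_k\times\mathbb I\to\Bb$ from $p\circ s_k$ to $\iota_{\Uu_k}$. We may assume $\sec_{\CC}(p)$ is finite.

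\textbf{Step 1: the open cover.} Regard $\varprojlim_{\CC}\Bb$ as the space of compatible families $(x_c)_c$, a subspace of $\prod_c\Bb(c)$. Define
\[
W_k=\bigl\{(x_c)_c\in\varprojlim_{\CC}\Bb \ \big|\ x_i\in\Uu_k(i)\bigr\}.
\]
This is the preimage of $\Uu_k(i)$ under the projection to $\Bb(i)$, so it is open.

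\textbf{Step 2: the $W_k$ cover.} Given $(x_c)_c$, choose $k$ with $x_i\in\Uu_k(i)$. For each $c$, pick a morphism $\alpha\colon i\to c$, which exists because $i$ is weakly initial. Compatibility gives $x_c=\Bb(\alpha)(x_i)$. Since $\Uu_k$ is a $\CC$-subspace, $\Bb(\alpha)$ maps $\Uu_k(i)$ into $\Uu_k(c)$, so $x_c\in\Uu_k(c)$. Hence $W_k$ coincides, as a set, with $\varprojlim_{\CC}\Uu_k$, and the $W_k$ cover $\varprojlim_{\CC}\Bb$.

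\textbf{Step 3: matching topologies.} We also need $W_k$ and $\varprojlim_{\CC}\Uu_k$ to agree as spaces. The space $\varprojlim_{\CC}\Uu_k$ carries the subspace topology from $\prod_c\Uu_k(c)$. Because $\CC$ has finitely many objects, $\prod_c\Uu_k(c)$ is open in $\prod_c\Bb(c)$, and this remains true in the compactly generated category, since the product of finitely many open inclusions is open. Therefore
\[
\varprojlim_{\CC}\Uu_k=\varprojlim_{\CC}\Bb\cap\prod_c\Uu_k(c)
\]
is an open subspace of $\varprojlim_{\CC}\Bb$ with the induced topology, and it equals $W_k$. This is where finiteness is used: with infinitely many objects the product of opens need not be open. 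I expect this bookkeeping to be the main (if mild) obstacle.

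\textbf{Step 4: local sections.} Set $\sigma_k=\varprojlim_{\CC}s_k\colon W_k\to\varprojlim_{\CC}\Ee$. By functoriality, $(\varprojlim_{\CC}p)\circ\sigma_k=\varprojlim_{\CC}(p\circ s_k)$. Proposition~\ref{cor:limit-preserves-homotopies}(2) gives
\[
\varprojlim_{\CC}(p\circ s_k)\simeq\varprojlim_{\CC}\iota_{\Uu_k},
\]
and $\varprojlim_{\CC}\iota_{\Uu_k}$ is the inclusion $W_k\hookrightarrow\varprojlim_{\CC}\Bb$. Concretely, the homotopy is obtained by viewing $H_k$ as a $\CC$-map $\Uu_k\to\Bb^{\mathbb I}$ via Proposition~\ref{prop:adjoint}, and using
\[
\varprojlim_{\CC}\bigl(\Bb^{\mathbb I}\bigr)\cong\Hom_{\CC}(*,\Bb^{\mathbb I})\cong\Hom_{\Top}\bigl(\mathbb I,\varprojlim_{\CC}\Bb\bigr).
\]
Thus each $W_k$ is sectional for $\varprojlim_{\CC}p$, which yields $\sec(\varprojlim_{\CC}p)\leq n$.
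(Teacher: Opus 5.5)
Your proof is correct and follows the paper's argument. Weak initiality shows that the $\varprojlim_{\CC}\Uu_k$ cover $\varprojlim_{\CC}\Bb$, finiteness of $\Obj(\CC)$ handles the point-set topology, and applying $\varprojlim_{\CC}$ to the local sections and homotopies gives local sections of $\varprojlim_{\CC}p$. The one small difference is where finiteness enters: you get openness directly as the preimage of $\Uu_k(i)$ under the projection to $\Bb(i)$, which needs no finiteness, and spend finiteness only on identifying the topologies, whereas the paper uses it to make $\prod_c\Uu_k(c)$ open.
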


\begin{proof}
Let $\{\Uu_0,\ldots,\Uu_n\}$ be a sectional cover of $\Bb$, and let
$i$ be weakly initial. We claim that the spaces
$\varprojlim_{\CC}\Uu_j$ form an open cover of
$\varprojlim_{\CC}\Bb$.

A point $y=(y_c)_c$ of $\varprojlim_{\CC}\Bb$ satisfies
$y_i\in\Uu_j(i)$ for some $j$. For every object $c$, choose a morphism
$\alpha_c\colon i\to c$. Compatibility gives
$y_c=\Bb(\alpha_c)(y_i)\in\Uu_j(c)$, and hence
$y\in\varprojlim_{\CC}\Uu_j$.

Moreover,
\[
\varprojlim_{\CC}\Uu_j
=
\left(\prod_{c\in\Obj(\CC)}\Uu_j(c)\right)
\cap
\varprojlim_{\CC}\Bb.
\]
The product on the right is open because $\CC$ has finitely many
objects. Finally, a homotopy local section of $p$ over $\Uu_j$ induces,
after taking limits, a homotopy local section of
$\varprojlim_{\CC}p$ over $\varprojlim_{\CC}\Uu_j$.
\end{proof}

Under the hypotheses of the preceding proposition, we therefore have
\[
\max\left\{
\sup_{c\in\Obj(\CC)}\sec(p_c),
\,
\sec\left(\varprojlim_{\CC}p\right)
\right\}
\leq
\sec_{\CC}(p).
\]

\subsection{Homotopy lifting and strict sections}

\begin{defi}\label{def:HLP-diagrams}
A morphism $p\colon\Ee\to\Bb$ has the \emph{homotopy lifting property}
if every commutative square
\[
\begin{tikzcd}
\X
  \arrow[r,"f"]
  \arrow[d,"i_0"']
&
\Ee
  \arrow[d,"p"]
\\
\X\times\mathbb I
  \arrow[r,"H"']
&
\Bb
\end{tikzcd}
\]
admits a lift $\widetilde H\colon\X\times\mathbb I\to\Ee$ satisfying
$p\circ\widetilde H=H$ and $\widetilde H\circ i_0=f$.
\end{defi}

\begin{prop}\label{prop:homotopy-sections-strict}
If $p$ has the homotopy lifting property, every homotopy local section
of $p$ can be replaced by a strict local section. Consequently,
$\sec_{\CC}(p)$ may be defined using strict local sections.
\end{prop}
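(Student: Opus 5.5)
Given a homotopy local section $s\colon\Uu\to\Ee$ with a $\CC$-homotopy $H\colon\Uu\times\mathbb I\to\Bb$ from $p\circ s$ to $\iota_{\Uu}$, we apply the homotopy lifting property of Definition~\ref{def:HLP-diagrams} with $\X=\Uu$ and $f=s$. The square
\[
\begin{tikzcd}
\Uu \arrow[r,"s"] \arrow[d,"i_0"'] & \Ee \arrow[d,"p"]\\
\Uu\times\mathbb I \arrow[r,"H"'] & \Bb
\end{tikzcd}
\]
commutes because $H\circ i_0=H_0=p\circ s$. The hypothesis gives a $\CC$-map $\widetilde H\colon\Uu\times\mathbb I\to\Ee$ with $p\circ\widetilde H=H$ and $\widetilde H\circ i_0=s$. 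We then set $s'=\widetilde H\circ i_1\colon\Uu\to\Ee$. This is a composite of $\CC$-maps, hence a $\CC$-map, and
\[
p\circ s'=p\circ\widetilde H\circ i_1=H\circ i_1=H_1=\iota_{\Uu}.
\]
So $s'$ is a strict local section. Moreover, $\widetilde H$ is a $\CC$-homotopy from $s$ to $s'$, though we do not need this.

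For the consequence, define the strict variant of $\sec_{\CC}(p)$ using covers by open $\CC$-subspaces that admit strict sections. A strict section is in particular a homotopy section, since constant homotopies are $\CC$-homotopies. Hence every strictly sectional cover is sectional, and the strict variant is at least $\sec_{\CC}(p)$. Conversely, by the first step every sectional member of a cover is strictly sectional, so a sectional cover with $n+1$ members is strictly sectional. The two numbers therefore agree, including the value $\infty$.

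There is no real obstacle. The only point needing care is the orientation of the homotopy. The definition gives $p\circ s\simeq_{\CC}\iota_{\Uu}$, but $\simeq_{\CC}$ is symmetric (reverse $t\mapsto 1-t$, which is objectwise and preserves naturality). So we may choose $H$ to start at $p\circ s$, which is what lets it be lifted starting from $s$.
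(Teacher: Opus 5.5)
Your proposal is correct and follows the paper's proof: lift the $\CC$-homotopy $H$ from $p\circ s$ to $\iota_{\Uu}$, with initial value $s$, and take $\widetilde H\circ i_1$ as the strict section. Your check that strictly sectional and sectional covers give the same minimal number, and your remark on orientation, spell out details the paper leaves implicit; the paper's definition of $p\circ s\simeq_{\CC}\iota_{\Uu}$ already provides a homotopy in the needed direction.
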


\begin{proof}
Let $s\colon\Uu\to\Ee$ satisfy
$p\circ s\simeq_{\CC}\iota_{\Uu}$, and choose a homotopy
$H\colon\Uu\times\mathbb I\to\Bb$ from $p\circ s$ to
$\iota_{\Uu}$. Lift $H$ with initial value $s$. Then
$\widetilde H_1\colon\Uu\to\Ee$ satisfies
$p\circ\widetilde H_1=\iota_{\Uu}$.
\end{proof}

\subsection{Domination of the base}

\begin{prop}
\label{prop:homotopic_invariance_base}
\label{cor:sectional-homotopy-base}
Let $f\colon\X\to\Y$ and $g\colon\Y\to\X$ be $\CC$-maps satisfying
$f\circ g\simeq_{\CC}1_{\Y}$. For every morphism
$p\colon\Ee\to\X$,
\[
\sec_{\CC}(f\circ p)\leq\sec_{\CC}(p).
\]
\end{prop}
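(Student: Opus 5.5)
Let $\{\Uu_0,\ldots,\Uu_n\}$ be a sectional cover of $\X$ for $p$, with $n=\sec_{\CC}(p)$; if no finite cover exists there is nothing to prove. The plan is to pull this cover back along $g$, exactly as in the proof of Proposition~\ref{prop:category-domination}. By Lemma~\ref{lem:inverse_image_covers}, the open $\CC$-subspaces $\V_i=g^{-1}(\Uu_i)$ form an open cover of $\Y$ with $n+1$ members. It then suffices to show that each $\V_i$ is sectional for $f\circ p$.

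Fix $i$. Choose a $\CC$-map $s_i\colon\Uu_i\to\Ee$ with $p\circ s_i\simeq_{\CC}\iota_i$, where $\iota_i\colon\Uu_i\hookrightarrow\X$ is the inclusion. Let $g_i\colon\V_i\to\Uu_i$ be the restriction of $g$, which is a $\CC$-map because the structure maps of $\V_i$ and $\Uu_i$ are restrictions of those of $\Y$ and $\X$ and $g$ is natural. It satisfies $\iota_i\circ g_i=g\circ\iota_i'$, where $\iota_i'\colon\V_i\hookrightarrow\Y$ is the inclusion. Define the candidate section $\sigma_i=s_i\circ g_i\colon\V_i\to\Ee$. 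Then
\[
(f\circ p)\circ\sigma_i
=
f\circ(p\circ s_i)\circ g_i
\simeq_{\CC}
f\circ\iota_i\circ g_i
=
f\circ g\circ\iota_i'
\simeq_{\CC}
\iota_i'.
\]

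The only point needing care is that $\CC$-homotopies are stable under pre- and post-composition with $\CC$-maps, which makes both $\simeq_{\CC}$ steps legitimate. If $H$ is a $\CC$-homotopy, then $f\circ H\circ(g_i\times 1_{\mathbb I})$ is again a $\CC$-homotopy, since naturality is preserved by composition and $g_i\times 1_{\mathbb I}$ is a $\CC$-map. Similarly, $K\circ(\iota_i'\times1_{\mathbb I})$ is a $\CC$-homotopy for a $\CC$-homotopy $K\colon f\circ g\simeq_{\CC}1_{\Y}$. Transitivity of $\simeq_{\CC}$ follows from the usual concatenation of homotopies, performed objectwise; concatenation preserves naturality because the gluing is in the constant $\mathbb I$ coordinate.

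Hence $\{\V_0,\ldots,\V_n\}$ is a sectional cover for $f\circ p$, giving $\sec_{\CC}(f\circ p)\leq n=\sec_{\CC}(p)$. I do not expect a genuine obstacle here: the argument is the same bookkeeping as Proposition~\ref{prop:category-domination}, with the orbit factorization replaced by a homotopy section.
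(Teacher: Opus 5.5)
Your proposal is correct and follows essentially the same route as the paper's proof: you pull the sectional cover back along $g$, use $s_i\circ g_i$ as the local section over $g^{-1}(\Uu_i)$, and chain the homotopies $f\circ p\circ s_i\circ g_i\simeq_{\CC}f\circ g\circ\iota_i'\simeq_{\CC}\iota_i'$. The paper leaves implicit the justification that $\CC$-homotopies compose and are stable under pre- and post-composition with $\CC$-maps; you spell it out.
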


\begin{proof}
Let $\{\Uu_0,\ldots,\Uu_n\}$ be a sectional cover of $\X$ for $p$.
The subspaces $\V_i=g^{-1}(\Uu_i)$ form an open cover of $\Y$. Let
$s_i\colon\Uu_i\to\Ee$ be homotopy local sections and let
$g_i\colon\V_i\to\Uu_i$ be the restrictions of $g$. If
$\iota_i'$ denotes the inclusion of $\V_i$ into $\Y$, then
\[
f\circ p\circ s_i\circ g_i
\simeq_{\CC}
f\circ g\circ\iota_i'
\simeq_{\CC}
\iota_i'.
\]
Thus, each $\V_i$ is sectional for $f\circ p$.
\end{proof}

\section{Topological complexity of a diagram}
\label{sec:topological-complexity}

The topological complexity of a space is the sectional category of its
endpoint evaluation map. Since path spaces and endpoint evaluations are
defined objectwise, the same construction applies to $\CC$-spaces.

\begin{defi}
\label{def:path-diagram}
\label{def:TC-diagram}
Let $\X$ be a $\CC$-space. Its \emph{path $\CC$-space} is the
$\CC$-space $\X^{\mathbb I}$ defined by
\[
\X^{\mathbb I}(c)=\Hom_{\Top}(\mathbb I,\X(c)),
\qquad
\X^{\mathbb I}(\alpha)(\gamma)=\X(\alpha)\circ\gamma.
\]
The endpoint evaluation is the $\CC$-map
$\pi_{\X}\colon\X^{\mathbb I}\to\X\times\X$ given objectwise by
$\pi_{\X,c}(\gamma)=(\gamma(0),\gamma(1))$.

The \emph{topological complexity} of $\X$ is
\[
\TC_{\CC}(\X)=\sec_{\CC}(\pi_{\X}).
\]
\end{defi}

Thus, $\TC_{\CC}(\X)$ measures the minimum number of open
$\CC$-subspaces required to define compatible local motion planners.
We use the reduced convention, so $\TC_{\CC}(\X)=0$ precisely when
$\pi_{\X}$ admits a global homotopy section. For the terminal indexing
category, this reduces to the ordinary topological complexity.

\subsection{Orbit-relative path-connectivity}

The surjectivity of endpoint evaluation is governed by connectivity of
the spaces of generalized points.

\begin{defi}\label{def:O-path-connected}
Let $\mathcal O$ be an orbit family. A $\CC$-space $\X$ is
\emph{$\mathcal O$-path-connected} if any two points of
$\Hom_{\CC}(\T,\X)$ can be joined by a path, for every
$\T\in\mathcal O$.
\end{defi}

Here path-connectivity is understood in the sense that every pair of
points can be joined by a path; this condition is therefore vacuous for
the empty space. By Proposition~\ref{prop:adjoint}, a path between two
maps $\T\to\X$ is equivalently a $\CC$-homotopy between them.

\begin{prop}
\label{prop:connectivity-monotone}
\label{prop:representable-connected}
\label{prop:pathfibration_surjective}
\label{cor:canonical-connectivity}
Let $\mathcal O$ be an orbit family.
\begin{enumerate}
    \item A $\CC$-space $\X$ is $\mathcal O$-path-connected if and only
    if $\pi_{\X}$ is $\mathcal O$-surjective.

    \item The space $\X$ is
    $\mathcal O_{\mathrm{rep}}(\CC)$-path-connected if and only if
    every $\X(c)$ is path-connected.

    \item If $\mathcal O\subseteq\mathcal O'$ and $\X$ is
    $\mathcal O'$-path-connected, then it is
    $\mathcal O$-path-connected.
\end{enumerate}
Consequently,
\[
\mathcal O_{\mathrm{top}}\text{-path-connected}
\Longrightarrow
\mathcal O_{\mathrm{disc}}\text{-path-connected}
\Longrightarrow
\mathcal O_{\mathrm{rep}}\text{-path-connected}.
\]
\end{prop}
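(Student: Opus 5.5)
The plan is to reduce all three assertions to mapping-space identifications already established: the exponential law of Proposition~\ref{prop:adjoint}, the product formula \eqref{eq:mapping-space-product} and the enriched Yoneda homeomorphism. The main step is to rewrite, for a fixed $\T\in\mathcal O$, the map $\Hom_{\CC}(\T,\pi_{\X})$ as the ordinary endpoint evaluation of the space $\Hom_{\CC}(\T,\X)$.

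For the first assertion, take $\T\in\mathcal O$. Since $\X^{\mathbb I}$ is exactly the cotensor $\X^K$ with $K=\mathbb I$, Proposition~\ref{prop:adjoint} gives a natural homeomorphism
\[
\Hom_{\CC}(\T,\X^{\mathbb I})\cong\Hom_{\Top}\bigl(\mathbb I,\Hom_{\CC}(\T,\X)\bigr).
\]
Combining this with \eqref{eq:mapping-space-product}, which identifies $\Hom_{\CC}(\T,\X\times\X)$ with $\Hom_{\CC}(\T,\X)\times\Hom_{\CC}(\T,\X)$, I will check that $\Hom_{\CC}(\T,\pi_{\X})$ corresponds to the endpoint evaluation
\[
\gamma\longmapsto\bigl(\gamma(0),\gamma(1)\bigr)
\]
on the path space of $\Hom_{\CC}(\T,\X)$. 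This is a naturality check. A map $m\colon\T\to\X^{\mathbb I}$ corresponds to the path $t\mapsto \mathrm{ev}_t\circ m$, and the endpoint evaluation $\pi_{\X}$ is the pair $(\mathrm{ev}_0,\mathrm{ev}_1)$ of $\CC$-maps. Under these identifications, surjectivity of $\Hom_{\CC}(\T,\pi_{\X})$ says exactly that any two points of $\Hom_{\CC}(\T,\X)$ are joined by a path. Quantifying over all $\T\in\mathcal O$ then gives the first equivalence, and the empty case causes no difficulty on either side.

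For the second assertion, the representable family consists of the objects isomorphic to some $\CC(c,-)$. Path-connectivity of $\Hom_{\CC}(\T,\X)$ is invariant under isomorphism of $\T$, and the enriched Yoneda homeomorphism gives $\Hom_{\CC}(\CC(c,-),\X)\cong\X(c)$. So the condition is exactly that every $\X(c)$ is path-connected. Alternatively, one could combine the first assertion with Proposition~\ref{prop:three-surjectivities}(1) and the elementary fact that $\pi_{\X(c)}$ is surjective if and only if $\X(c)$ is path-connected.

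The third assertion is immediate from the definition, since the condition for $\mathcal O'$ quantifies over a larger class of shapes. Applying it to the chain $\mathcal O_{\mathrm{rep}}\subseteq\mathcal O_{\mathrm{disc}}\subseteq\mathcal O_{\mathrm{top}}$ gives the displayed implications. The only point needing care is the identification in the first step: checking that the adjunction homeomorphism is compatible with evaluation at $0$ and $1$, that is, naturality in the $K$-variable applied to the inclusions $\{0\},\{1\}\hookrightarrow\mathbb I$. I expect this to be routine.
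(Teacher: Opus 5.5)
Your proposal is correct and follows the paper's proof: use Proposition~\ref{prop:adjoint} and the product formula to identify $\Hom_{\CC}(\T,\pi_{\X})$ with the endpoint evaluation on the path space of $\Hom_{\CC}(\T,\X)$, use the enriched Yoneda homeomorphism for the representable case, and use the inclusion of families for the third assertion. Your extra remarks only make explicit details the paper leaves implicit: compatibility of the adjunction with evaluation at $0$ and $1$, isomorphism-invariance over the representable family, and the empty case.
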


\begin{proof}
For $\T\in\mathcal O$, Propositions~\ref{prop:adjoint} and
the universal property of the product give natural homeomorphisms
\[
\Hom_{\CC}(\T,\X^{\mathbb I})
\cong
\Hom_{\Top}\bigl(\mathbb I,\Hom_{\CC}(\T,\X)\bigr)
\]
and
\[
\Hom_{\CC}(\T,\X\times\X)
\cong
\Hom_{\CC}(\T,\X)^2.
\]
Under these identifications, $\Hom_{\CC}(\T,\pi_{\X})$ is the endpoint
evaluation on the path space of $\Hom_{\CC}(\T,\X)$. It is surjective
precisely when every pair of points can be joined by a path, proving the
first assertion.

The second follows from the enriched Yoneda homeomorphism
$\allowbreak\Hom_{\CC}\allowbreak(\CC(c,-),\allowbreak \X)\allowbreak \cong \allowbreak \X(c)$. The third is immediate from the
inclusion of orbit families.
\end{proof}

For the discrete and topological families, Proposition
\ref{prop:three-surjectivities} shows that path-connectivity can be
tested on the canonical orbital points of $\X\times\X$. This condition
cannot generally be expressed solely in terms of
$\varinjlim_{\CC}\X$, since colimits need not preserve products.

\subsection{Examples of relations between orbits and being orbit-path-connected}
   
An orbit need not be orbit-path-connected.

\begin{ex}
\label{ex:representable-not-connected}
\label{ex:arrow-orbit-not-connected}
Let $\mathcal S$ be the category of two parallel arrows introduced in Example~\ref{ex:category-two-parallel-arrows}. The
representable orbit $\mathcal S(a,-)$ is not
$\mathcal O_{\mathrm{rep}}(\mathcal S)$-path-connected because
\[
\Hom_{\mathcal S}
\bigl(\mathcal S(b,-),\mathcal S(a,-)\bigr)
\cong
\mathcal S(a,b)=\{f_1,f_2\}
\]
is discrete and disconnected.

Similarly, for the category $\mathcal I=(0\to1)$ introduced in Example~\ref{ex:interval}, every diagram
$A\to *$ is a topological orbit. If $A$ is not path-connected, then
this orbit is not
$\mathcal O_{\mathrm{rep}}(\mathcal I)$-path-connected.
\end{ex}

Conversely, orbit-path-connectivity does not imply that the diagram is an
orbit.

\begin{ex}\label{ex:connected-non-orbit}
Consider the $\mathcal I$-space
\[
\X=
\left(
\mathbb R\times\mathbb S^1
\xrightarrow{\pi_2}
\mathbb S^1
\right).
\]
Every topological $\mathcal I$-orbit has the form
$\T=(A\to *)$, and a natural transformation $\T\to\X$ is determined
by a map $A\to\mathbb R$ and a point of $\mathbb S^1$. Hence
\[
\Hom_{\mathcal I}(\T,\X)
\cong
\Hom_{\Top}(A,\mathbb R)\times\mathbb S^1.
\]
The first factor is contractible by linear contraction and the second is
path-connected. Thus, $\X$ is
$\mathcal O_{\mathrm{top}}(\mathcal I)$-path-connected. Nevertheless,
$\varinjlim_{\mathcal I}\X\cong\mathbb S^1$, so $\X$ is not an orbit.
\end{ex}

\subsection{Recovery of the equivariant invariants}
\label{subsec:equivariant-case}

Let $G$ be a discrete group, regarded as a one-object category. Then
$\Top^G$ is the category of topological $G$-spaces and equivariant maps.
The unique representable orbit is the regular $G$-set $G$, while the
discrete orbits are precisely the transitive $G$-sets $G/H$.

Consequently, $\cat_G^{\mathrm{disc}}(X)$ recovers the equivariant
Lusternik--Schnirelmann category studied in
\cite{Marzantowwicz,Colman,Grant-Colman-Equivariant}: an invariant open
subset is categorical when its inclusion can be equivariantly compressed
to an orbit $G/H$.

Similarly, $\sec_G(p)$ is the equivariant sectional category of a
$G$-map $p$, and
\[
\TC_G(X)=\sec_G(\pi_X)
\]
is the equivariant topological complexity of Colman and Grant
\cite{Grant-Colman-Equivariant}, where $G$ acts pointwise on
$X^{\mathbb I}$ and diagonally on $X\times X$.

For every subgroup $H\leq G$, the natural homeomorphism
$\Hom_G(G/H,X) \allowbreak\cong \allowbreak X^H$ shows that
$\mathcal O_{\mathrm{disc}}(G)$-path-connectivity is equivalent to the
path-connectivity of every fixed-point space $X^H$. More generally, a
family $\mathcal F$ of subgroups determines the orbit family
$\{G/H\mid H\in\mathcal F\}$ and the corresponding connectivity
condition on the spaces $X^H$.

\begin{rem}
This comparison is stated for discrete groups. For a non-discrete
topological group, the one-object ordinary category forgets the topology
of the group and does not encode joint continuity of the action. An
enriched version of the present framework would be required in that
setting.
\end{rem}

\subsection{Endpoint evaluation and LS-category}

The endpoint evaluation has the homotopy lifting property.

\begin{prop}\label{prop:path-fibration-HLP}
For every $\CC$-space $\X$, the endpoint evaluation map
\[
\pi_{\X}\colon\X^{\mathbb I}\longrightarrow\X\times\X
\]
has the homotopy lifting property.
\end{prop}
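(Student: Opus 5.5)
The plan is to use the classical explicit lifting formula for the free path fibration, which is a Hurewicz fibration, and check that it is natural in $\CC$. Classically there is a retraction
\[
r\colon \mathbb I\times\mathbb I\longrightarrow (\mathbb I\times\{0\})\cup(\{0,1\}\times\mathbb I),
\]
which does not depend on any space. For a space $X$, a square with top map $f\colon A\to X^{\mathbb I}$ and bottom homotopy $H=(H',H'')\colon A\times\mathbb I\to X\times X$ with $\pi_X\circ f=H_0$ produces a map
\[
\Phi\colon A\times\bigl((\mathbb I\times\{0\})\cup(\{0,1\}\times\mathbb I)\bigr)\to X,
\]
given by $(a,s,0)\mapsto f(a)(s)$, $(a,0,t)\mapsto H'(a,t)$ and $(a,1,t)\mapsto H''(a,t)$. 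These pieces agree on the overlaps precisely because $\pi_X\circ f=H_0$. The lift is then $\widetilde H(a,t)(s)=\Phi(a,r(s,t))$.

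First, I would apply this construction objectwise. Given a commutative square of $\CC$-maps as in Definition~\ref{def:HLP-diagrams}, with $\Ee=\X^{\mathbb I}$ and $\Bb=\X\times\X$, I define $\widetilde H^c\colon \X(c)'\times\mathbb I\to\X(c)^{\mathbb I}$ by the formula above. Here I write the domain diagram as $\mathbb A$ to avoid a clash with $\X$; its values are $\mathbb A(c)$.

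Second, I check continuity. The map $\Phi^c$ is continuous because it is glued from continuous maps on the two closed pieces $\mathbb A(c)\times(\mathbb I\times\{0\})$ and $\mathbb A(c)\times(\{0,1\}\times\mathbb I)$, which form a finite closed cover. Continuity of $f^c$ in its path variable uses the exponential law for compactly generated spaces. The adjoint of $\widetilde H^c$ is the composite $\Phi^c\circ(1\times r)$, which is continuous, so $\widetilde H^c$ is continuous by Proposition~\ref{prop:adjoint} applied with the terminal category, that is, the ordinary exponential law. The identities $\pi\circ\widetilde H^c=H^c$ and $\widetilde H^c_0=f^c$ follow because $r$ is the identity on its image.

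Third, I verify naturality, which is the only genuinely diagrammatic point. For $\alpha\colon c\to d$, the relations
\[
\X(\alpha)\circ f^c(a)=f^d(\mathbb A(\alpha)a)
\qquad\text{and}\qquad
\X(\alpha)\circ H'^c=H'^d\circ(\mathbb A(\alpha)\times 1),
\]
together with the analogous relation for $H''$, give
\[
\X(\alpha)\circ\Phi^c=\Phi^d\circ\bigl(\mathbb A(\alpha)\times 1\bigr).
\]
Since $r$ involves only the constant factor $\mathbb I\times\mathbb I$, precomposition with $1\times r$ preserves this identity. Hence
\[
\X^{\mathbb I}(\alpha)\circ\widetilde H^c=\widetilde H^d\circ\bigl(\mathbb A(\alpha)\times1\bigr),
\]
so $\widetilde H$ is a $\CC$-map.

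The main obstacle I expect is the point-set continuity in the compactly generated setting, namely the gluing and the exponential adjunction. These are handled by the closed-cover argument and by Proposition~\ref{prop:adjoint}. The key conceptual observation is that the retraction $r$ is independent of the object $c$, and this independence is what makes the objectwise lifts automatically natural.
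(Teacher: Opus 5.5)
Your proposal is correct and is essentially the paper's argument: the paper writes an explicit three-piece concatenation formula for $\widetilde H_c(y,t)(s)$, and that formula is exactly $\Phi^c(y,\rho(s,t))$ for one particular retraction $\rho$ of $\mathbb I\times\mathbb I$ onto $(\mathbb I\times\{0\})\cup(\{0,1\}\times\mathbb I)$, with naturality verified the same way, because the construction does not depend on $c$. There is one cosmetic slip: the domain of $\widetilde H^c$ should be $\mathbb A(c)\times\mathbb I$, not $\X(c)'\times\mathbb I$.
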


\begin{proof}
Consider a commutative square
\[
\begin{tikzcd}
\Y
  \arrow[r,"f"]
  \arrow[d,"i_0"']
&
\X^{\mathbb I}
  \arrow[d,"\pi_{\X}"]
\\
\Y\times\mathbb I
  \arrow[r,"H"']
&
\X\times\X.
\end{tikzcd}
\]
Write $H=(H^0,H^1)$, where
$H^\varepsilon\colon\Y\times\mathbb I\to\X$ for
$\varepsilon\in\{0,1\}$. The commutativity of the square gives, for
every $c\in\Obj(\CC)$,
\[
H^0_c(y,0)=f_c(y)(0),
\qquad
H^1_c(y,0)=f_c(y)(1).
\]

For every object $c$, define
$\widetilde H_c\colon\Y(c)\times\mathbb I\to\X(c)^{\mathbb I}$ by
\[
\widetilde H_c(y,t)(s)=
\begin{cases}
H^0_c(y,t-3s),
    & 0\leq s\leq \dfrac{t}{3},\\[2mm]
f_c(y)\left(
\dfrac{s-\frac{t}{3}}{1-\frac{2t}{3}}
\right),
    & \dfrac{t}{3}\leq s\leq1-\dfrac{t}{3},\\[4mm]
H^1_c(y,3s+t-3),
    & 1-\dfrac{t}{3}\leq s\leq1.
\end{cases}
\]

The first piece traverses the first endpoint homotopy backwards from
$H^0_c(y,t)$ to $f_c(y)(0)$; the middle piece traverses the original
path $f_c(y)$; and the last piece traverses the second endpoint
homotopy from $f_c(y)(1)$ to $H^1_c(y,t)$. The three expressions agree
at $s=t/3$ and $s=1-t/3$, so the pasting lemma and the exponential law
give the continuity of $\widetilde H_c$.

At $t=0$, the first and third pieces are constant and the middle piece
is $f_c(y)$, hence
$\widetilde H_c(y,0)=f_c(y)$. Moreover,
\[
\widetilde H_c(y,t)(0)=H^0_c(y,t),
\qquad
\widetilde H_c(y,t)(1)=H^1_c(y,t),
\]
and therefore $\pi_{\X,c}\circ\widetilde H_c=H_c$.

Finally, let $\alpha\colon c\to d$ be a morphism of $\CC$. The
naturality of $f$, $H^0$ and $H^1$ implies, on each of the three pieces,
that
\[
\X^{\mathbb I}(\alpha)\circ\widetilde H_c
=
\widetilde H_d\circ
\bigl(\Y(\alpha)\times1_{\mathbb I}\bigr).
\]
Thus, the maps $\widetilde H_c$ assemble into a $\CC$-homotopy
$\widetilde H\colon\Y\times\mathbb I\to\X^{\mathbb I}$ satisfying
$\pi_{\X}\circ\widetilde H=H$ and
$\widetilde H\circ i_0=f$.
\end{proof}

\begin{cor}\label{cor:TC-strict-sections}
In the definition of $\TC_{\CC}(\X)$, homotopy local sections of
$\pi_{\X}$ may be replaced by strict local sections.
\end{cor}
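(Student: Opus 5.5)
This follows by combining Proposition~\ref{prop:path-fibration-HLP} with Proposition~\ref{prop:homotopy-sections-strict}. By Definition~\ref{def:TC-diagram}, $\TC_{\CC}(\X)=\sec_{\CC}(\pi_{\X})$, so we need to show that $\pi_{\X}$ admits a sectional cover with $n+1$ members and homotopy local sections if and only if it admits one with $n+1$ members and strict local sections.

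One direction is trivial. A strict local section $s\colon\Uu\to\X^{\mathbb I}$ satisfies $\pi_{\X}\circ s=\iota_{\Uu}$. In particular $\pi_{\X}\circ s\simeq_{\CC}\iota_{\Uu}$ through the constant homotopy, which is natural. Hence any cover by open $\CC$-subspaces carrying strict sections is a sectional cover in the sense of Definition~\ref{def:sectional-category-diagram}.

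For the converse, let $\Uu$ be sectional with $s\colon\Uu\to\X^{\mathbb I}$ and choose a $\CC$-homotopy $H\colon\Uu\times\mathbb I\to\X\times\X$ from $\pi_{\X}\circ s$ to $\iota_{\Uu}$. The square with top edge $s$ and bottom edge $H$ commutes, since $H\circ i_0=\pi_{\X}\circ s$. Proposition~\ref{prop:path-fibration-HLP}, applied with $\Y=\Uu$, gives a natural lift $\widetilde H$. Then $\widetilde H_1$ is a $\CC$-map with $\pi_{\X}\circ\widetilde H_1=\iota_{\Uu}$, which is exactly the argument of Proposition~\ref{prop:homotopy-sections-strict}. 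Applying this to every member of the cover yields a strict sectional cover of the same cardinality, so both versions of $\TC_{\CC}(\X)$ agree.

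There is no real obstacle. The only point to check is that Proposition~\ref{prop:path-fibration-HLP} is stated for an arbitrary $\CC$-space $\Y$, so it applies to an open $\CC$-subspace $\Uu\subseteq\X\times\X$ with its subspace topology in each component. Since the explicit lift there is built objectwise and is natural by construction, no further hypothesis on $\Uu$ is needed. One may also note that $\widetilde H_1(u)$ is the concatenation of the reversed first endpoint homotopy, the path $s(u)$, and the second endpoint homotopy, which gives an explicit compatible motion planner.
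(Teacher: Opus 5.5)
Your proposal is correct and is the paper's proof: the corollary follows by combining Proposition~\ref{prop:path-fibration-HLP} (endpoint evaluation has the homotopy lifting property) with Proposition~\ref{prop:homotopy-sections-strict}, whose lifting argument you spell out. Your extra remarks are accurate but not needed for the proof: strict sections are homotopy sections via the constant homotopy, and $\widetilde H_1$ is the concatenation of the reversed first endpoint homotopy, the path $s(u)$, and the second endpoint homotopy.
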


\begin{proof}
This follows from Propositions~\ref{prop:path-fibration-HLP} and
\ref{prop:homotopy-sections-strict}.
\end{proof}
We can now obtain the basic upper bound.

\begin{thm}\label{thm:TC-category-bound}
Let $\mathcal O$ be an orbit family and let $\X$ be
$\mathcal O$-path-connected. Then
\[
\TC_{\CC}(\X)
\leq
\cat_{\CC}^{\mathcal O}(\X\times\X).
\]
\end{thm}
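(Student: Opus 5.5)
The bound follows by chaining two results already established, with $p=\pi_{\X}\colon\X^{\mathbb I}\to\X\times\X$ and base $\Bb=\X\times\X$.

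First, by part (1) of Proposition~\ref{prop:connectivity-monotone}, the hypothesis that $\X$ is $\mathcal O$-path-connected is equivalent to $\pi_{\X}$ being $\mathcal O$-surjective. Concretely, for each $\T\in\mathcal O$ the map $\Hom_{\CC}(\T,\pi_{\X})$ is identified, via Proposition~\ref{prop:adjoint} and \eqref{eq:mapping-space-product}, with the endpoint evaluation on the path space of $\Hom_{\CC}(\T,\X)$. That evaluation is surjective exactly because any two generalized points of shape $\T$ are joined by a path.

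Second, apply part (1) of Theorem~\ref{prop:surjective-LS-category} to the $\mathcal O$-surjective map $\pi_{\X}$. This gives
\[
\sec_{\CC}(\pi_{\X})\leq\cat_{\CC}^{\mathcal O}(\X\times\X).
\]
By Definition~\ref{def:TC-diagram}, the left-hand side is $\TC_{\CC}(\X)$, which is the claimed inequality.

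It is worth making the mechanism explicit. Take an $\mathcal O$-categorical open subspace $\Uu\subseteq\X\times\X$ with $\iota_{\Uu}\simeq_{\CC}O\circ a$, where $O=(O^0,O^1)\colon\T\to\X\times\X$. A $\CC$-homotopy from $O^0$ to $O^1$, which exists by $\mathcal O$-path-connectivity, is the same as a $\CC$-map $\widetilde O\colon\T\to\X^{\mathbb I}$ lifting $O$. Then $\widetilde O\circ a$ is a homotopy local section over $\Uu$. If strict sections are wanted, Corollary~\ref{cor:TC-strict-sections} upgrades it.

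No step is a real obstacle; the content lies in the earlier results. The only point to check is that the identification in Proposition~\ref{prop:connectivity-monotone} is genuinely a homeomorphism compatible with $\pi_{\X}$, so that surjectivity transfers. This is exactly what that proposition provides.
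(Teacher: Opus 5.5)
Your proposal is correct and matches the paper's proof: Proposition~\ref{prop:pathfibration_surjective} turns $\mathcal O$-path-connectivity into $\mathcal O$-surjectivity of $\pi_{\X}$, and part (1) of Theorem~\ref{prop:surjective-LS-category} then gives $\sec_{\CC}(\pi_{\X})\leq\cat_{\CC}^{\mathcal O}(\X\times\X)$. Your explicit construction of $\widetilde O\circ a$ just unwinds these two results.
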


\begin{proof}
By Proposition~\ref{prop:pathfibration_surjective}, $\pi_{\X}$ is
$\mathcal O$-surjective. The result follows from
Theorem~\ref{prop:surjective-LS-category}.
\end{proof}

\begin{cor}\label{cor:TC-three-families}
Let $\X$ be a $\CC$-space.
\begin{enumerate}
    \item If every $\X(c)$ is path-connected, then
    $\TC_{\CC}(\X)\leq
    \cat_{\CC}^{\mathrm{rep}}(\X\times\X)$.

    \item If $\X$ is
    $\mathcal O_{\mathrm{disc}}(\CC)$-path-connected, then
    $\TC_{\CC}(\X)\leq
    \cat_{\CC}^{\mathrm{disc}}(\X\times\X)$.

    \item If $\X$ is
    $\mathcal O_{\mathrm{top}}(\CC)$-path-connected, then
    $\TC_{\CC}(\X)\leq
    \cat_{\CC}^{\mathrm{top}}(\X\times\X)$.
\end{enumerate}
\end{cor}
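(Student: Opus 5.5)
The corollary is obtained by specializing Theorem~\ref{thm:TC-category-bound} to the three canonical orbit families of Definition~\ref{def:canonical-orbit-families}. Each of $\mathcal O_{\mathrm{rep}}(\CC)$, $\mathcal O_{\mathrm{disc}}(\CC)$ and $\mathcal O_{\mathrm{top}}(\CC)$ is an orbit family: it is closed under isomorphisms, contains every representable $\CC(c,-)$, and consists of topological orbits. This was observed in Section~\ref{sec:orbits}, where representables were shown to be orbits and the inclusions $\mathcal O_{\mathrm{rep}}(\CC)\subseteq\mathcal O_{\mathrm{disc}}(\CC)\subseteq\mathcal O_{\mathrm{top}}(\CC)$ were recorded. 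So Theorem~\ref{thm:TC-category-bound} applies to each family once its hypothesis is verified.

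For items (2) and (3), the hypothesis is literally $\mathcal O$-path-connectivity for $\mathcal O=\mathcal O_{\mathrm{disc}}(\CC)$ and $\mathcal O=\mathcal O_{\mathrm{top}}(\CC)$. Applying Theorem~\ref{thm:TC-category-bound} directly gives $\TC_{\CC}(\X)\leq\cat_{\CC}^{\mathrm{disc}}(\X\times\X)$ and $\TC_{\CC}(\X)\leq\cat_{\CC}^{\mathrm{top}}(\X\times\X)$.

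For item (1), one extra step translates the hypothesis. By part (2) of Proposition~\ref{prop:representable-connected}, which rests on the enriched Yoneda homeomorphism $\Hom_{\CC}(\CC(c,-),\X)\cong\X(c)$, objectwise path-connectivity of $\X$ is equivalent to $\mathcal O_{\mathrm{rep}}(\CC)$-path-connectivity. One point needs checking: $\mathcal O_{\mathrm{rep}}(\CC)$ is closed under isomorphisms, so its members are isomorphic copies of representables. Since $\Hom_{\CC}(-,\X)$ carries isomorphisms to homeomorphisms, path-connectivity of $\Hom_{\CC}(\T,\X)$ for such $\T$ reduces to path-connectivity of the spaces $\X(c)$. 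Theorem~\ref{thm:TC-category-bound} then yields $\TC_{\CC}(\X)\leq\cat_{\CC}^{\mathrm{rep}}(\X\times\X)$.

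No step is a genuine obstacle, since the argument is bookkeeping on top of Theorem~\ref{thm:TC-category-bound}. The only place needing any care is the identification in item (1), and that is settled by Proposition~\ref{prop:representable-connected} together with invariance of mapping spaces under isomorphism of the source.
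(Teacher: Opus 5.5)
Your proposal is correct and takes the paper's approach. The paper gives no separate proof of this corollary; it is the specialization of Theorem~\ref{thm:TC-category-bound} to the three canonical orbit families, with item (1) using part (2) of Proposition~\ref{prop:representable-connected} to turn objectwise path-connectivity into $\mathcal O_{\mathrm{rep}}(\CC)$-path-connectivity, exactly as you do.
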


Thus, enlarging the orbit family gives a potentially smaller
LS-category bound at the cost of a stronger connectivity hypothesis.

\subsection{Lower bounds and homotopy invariance}

\begin{prop}
\label{prop:TC-objectwise}
\label{prop:TC-limit}
Let $\X$ be a $\CC$-space.
\begin{enumerate}
    \item For every $c\in\Obj(\CC)$,
    \[
    \TC(\X(c))\leq\TC_{\CC}(\X).
    \]

    \item If $\CC$ has finitely many objects and a weakly initial
    object, then
    \[
    \TC\left(\varprojlim_{\CC}\X\right)
    \leq
    \TC_{\CC}(\X).
    \]
\end{enumerate}
\end{prop}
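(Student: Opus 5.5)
Both parts reduce to Proposition~\ref{prop:sectional_category_inequality_objectwise} and Proposition~\ref{prop:sectional_category_limits}, applied to $p=\pi_{\X}$. The remaining work is to identify the evaluated and limit maps with ordinary endpoint evaluations.

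For the first assertion, evaluate at $c$. We have $(\X^{\mathbb I})(c)=\X(c)^{\mathbb I}$ and $(\X\times\X)(c)=\X(c)\times\X(c)$, because products are computed objectwise. The component $\pi_{\X,c}$ is exactly the classical endpoint evaluation $\pi_{\X(c)}$. Proposition~\ref{prop:sectional_category_inequality_objectwise} then gives
\[
\TC(\X(c))=\sec(\pi_{\X(c)})=\sec(\pi_{\X,c})\leq\sec_{\CC}(\pi_{\X})=\TC_{\CC}(\X).
\]

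For the second assertion, Proposition~\ref{prop:sectional_category_limits} gives $\sec(\varprojlim_{\CC}\pi_{\X})\leq\TC_{\CC}(\X)$. It remains to identify $\varprojlim_{\CC}\pi_{\X}$ with $\pi_{L}$, where $L=\varprojlim_{\CC}\X$. The map $\varprojlim_{\CC}\pi_{\X}$ goes from $\varprojlim_{\CC}(\X^{\mathbb I})$ to $\varprojlim_{\CC}(\X\times\X)$.

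\begin{enumerate}
\item \emph{The target.} Limits commute with products, so $\varprojlim_{\CC}(\X\times\X)\cong L\times L$. Alternatively, use $\varprojlim_{\CC}\cong\Hom_{\CC}(*,-)$ together with \eqref{eq:mapping-space-product}.
\item \emph{The source.} Proposition~\ref{prop:adjoint} with $K=\mathbb I$ gives
\[
\varprojlim_{\CC}\X^{\mathbb I}\cong\Hom_{\CC}(*,\X^{\mathbb I})\cong\Hom_{\Top}\bigl(\mathbb I,\Hom_{\CC}(*,\X)\bigr)\cong L^{\mathbb I}.
\]
\item \emph{Compatibility with endpoints.} Under these homeomorphisms, the limit of the endpoint evaluation becomes endpoint evaluation on $L^{\mathbb I}$. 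This is the same computation used in the proof of Proposition~\ref{prop:pathfibration_surjective} with $\T=*$. Concretely, a compatible family of paths $(\gamma_c)_c$ corresponds to the path $t\mapsto(\gamma_c(t))_c$ in $L$, and both maps send it to its pair of endpoints.
\end{enumerate}

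Sectional category is invariant under composing with homeomorphisms on source and target. Hence $\sec(\varprojlim_{\CC}\pi_{\X})=\TC(L)$, and the inequality follows.

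The step needing most care is this identification. In particular, one must check that the exponential homeomorphism used is continuous in the compactly generated category and commutes with the endpoint maps. Both facts are already packaged in Proposition~\ref{prop:adjoint} and in its use in Proposition~\ref{prop:pathfibration_surjective}, so the argument is essentially formal.
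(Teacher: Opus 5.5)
Your proposal is correct and follows essentially the same route as the paper: part (1) is Proposition~\ref{prop:sectional_category_inequality_objectwise} applied to $\pi_{\X}$, and part (2) is Proposition~\ref{prop:sectional_category_limits} combined with identifying $\varprojlim_{\CC}\pi_{\X}$ with the endpoint evaluation of $\varprojlim_{\CC}\X$, using that limits commute with products and Proposition~\ref{prop:adjoint}. Your additional remarks, namely the explicit correspondence on compatible families of paths and the invariance of sectional category under homeomorphisms of source and target, only make explicit what the paper's proof leaves implicit.
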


\begin{proof}
The first assertion follows by applying
Proposition~\ref{prop:sectional_category_inequality_objectwise} to
$\pi_{\X}$.

For the second, limits commute with products and
Proposition~\ref{prop:adjoint} gives
\[
\varprojlim_{\CC}\X^{\mathbb I}
\cong
\Hom_{\Top}
\left(
\mathbb I,\varprojlim_{\CC}\X
\right)
=
\left(\varprojlim_{\CC}\X\right)^{\mathbb I}.
\]
Under these identifications, $\varprojlim_{\CC}\pi_{\X}$ is the endpoint
evaluation of $\varprojlim_{\CC}\X$. The result follows from
Proposition~\ref{prop:sectional_category_limits}.
\end{proof}

Under the hypotheses of the second assertion, these estimates combine as
\[
\max\left\{
\sup_{c\in\Obj(\CC)}\TC(\X(c)),
\,
\TC\left(\varprojlim_{\CC}\X\right)
\right\}
\leq
\TC_{\CC}(\X).
\]

\begin{prop}\label{prop:TC-domination}
Let $f\colon\X\to\Y$ and $g\colon\Y\to\X$ be $\CC$-maps satisfying
$f\circ g\simeq_{\CC}1_{\Y}$. Then
\[
\TC_{\CC}(\Y)\leq\TC_{\CC}(\X).
\]
In particular, $\CC$-homotopy equivalent diagrams have the same
topological complexity.
\end{prop}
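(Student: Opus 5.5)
The plan is to reduce the statement to the two domination results already available for sectional category: Proposition~\ref{prop:homotopic_invariance_base}, which handles domination of the base, and a pullback-type comparison that relates $\pi_{\Y}$ to $\pi_{\X}$. Rather than go through pullbacks, I will argue directly with open covers and strict local sections, imitating the proof of Proposition~\ref{prop:category-domination}.

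First, fix a sectional cover $\{\Uu_0,\ldots,\Uu_n\}$ of $\X\times\X$ for $\pi_{\X}$ with $n=\TC_{\CC}(\X)$. By Corollary~\ref{cor:TC-strict-sections} we may take strict local sections $s_i\colon\Uu_i\to\X^{\mathbb I}$ with $\pi_{\X}\circ s_i=\iota_{\Uu_i}$. The map $g\times g\colon\Y\times\Y\to\X\times\X$ is a $\CC$-map, so Lemma~\ref{lem:inverse_image_covers} shows that $\V_i=(g\times g)^{-1}(\Uu_i)$ form an open cover of $\Y\times\Y$. Write $g_i\colon\V_i\to\Uu_i$ for the restriction.

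Next, define the candidate local section
\[
\sigma_i=f^{\mathbb I}\circ s_i\circ g_i\colon\V_i\longrightarrow\Y^{\mathbb I},
\]
where $f^{\mathbb I}\colon\X^{\mathbb I}\to\Y^{\mathbb I}$ is postcomposition with $f$ objectwise. This map is natural because $f$ is. We have $\pi_{\Y}\circ f^{\mathbb I}=(f\times f)\circ\pi_{\X}$, and therefore
\[
\pi_{\Y}\circ\sigma_i=(f\times f)\circ\iota_{\Uu_i}\circ g_i=(f\circ g\times f\circ g)\circ\iota_{\V_i}.
\]
Let $K\colon\Y\times\mathbb I\to\Y$ be a $\CC$-homotopy from $f\circ g$ to $1_{\Y}$. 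Then $(K\times K)$, precomposed with the diagonal in the $\mathbb I$-coordinate, gives a $\CC$-homotopy $\Y\times\Y\times\mathbb I\to\Y\times\Y$ from $f g\times f g$ to the identity. Its naturality holds because products are computed objectwise. Restricting to $\V_i$ gives $\pi_{\Y}\circ\sigma_i\simeq_{\CC}\iota_{\V_i}$, so each $\V_i$ is sectional for $\pi_{\Y}$, and hence $\TC_{\CC}(\Y)\leq n$.

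The only point needing care, and the main potential obstacle, is checking that the product homotopy and $f^{\mathbb I}$ are genuine $\CC$-maps in the compactly generated setting. This follows from the exponential law of Proposition~\ref{prop:adjoint} and functoriality of $(-)^{\mathbb I}$. For the final claim, apply the inequality in both directions to a $\CC$-homotopy equivalence.
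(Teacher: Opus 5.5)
Your proof is correct and is essentially the paper's argument. The paper applies Proposition~\ref{prop:homotopic_invariance_base} to $(f\times f)\circ\pi_{\X}$ and then composes local sections with $f^{\mathbb I}$, using $\pi_{\Y}\circ f^{\mathbb I}=(f\times f)\circ\pi_{\X}$; this is exactly what you do after inlining that proposition's proof. The reduction to strict sections via Corollary~\ref{cor:TC-strict-sections} is harmless but unnecessary, since homotopy local sections compose in the same way.
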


\begin{proof}
Since $(f\times f)\circ(g\times g)\simeq_{\CC}1_{\Y\times\Y}$,
Proposition~\ref{prop:homotopic_invariance_base} gives
\[
\sec_{\CC}\bigl((f\times f)\circ\pi_{\X}\bigr)
\leq
\sec_{\CC}(\pi_{\X}).
\]
Naturality of endpoint evaluation gives
$\pi_{\Y}\circ f^{\mathbb I}=(f\times f)\circ\pi_{\X}$. Hence every
homotopy local section of $(f\times f)\circ\pi_{\X}$ becomes one of
$\pi_{\Y}$ after composition with $f^{\mathbb I}$. Therefore,
\[
\TC_{\CC}(\Y)
\leq
\sec_{\CC}\bigl((f\times f)\circ\pi_{\X}\bigr)
\leq
\TC_{\CC}(\X).
\]
Applying the inequality in both directions gives homotopy invariance.
\end{proof}

\subsection{Examples}

\begin{ex}\label{ex:contractible-values-nontrivial-TC}
Let $\mathcal S$ be the category of two parallel arrows and consider
\[
\begin{tikzcd}
\mathbb R^2
  \arrow[r,"q",bend left]
  \arrow[r,"1"',bend right]
&
\mathbb R,
\end{tikzcd}
\qquad
q(x,y)=x^2+y^2,
\]
where $1$ is the constant map with value $1$. Its inverse limit is the
equalizer of $q$ and $1$, hence is homeomorphic to $\mathbb S^1$.
Although both values of the diagram are contractible,
\[
\TC_{\mathcal S}(\X)
\geq
\TC(\mathbb S^1)=1.
\]
Thus, the topological complexity of a diagram is not determined by the
topological complexities of its individual values.
\end{ex}

\begin{ex}\label{ex:inverse-limit-without-weakly-initial}
The weakly initial object hypothesis in
Proposition~\ref{prop:TC-limit} cannot be omitted. Let
$\mathcal V=(0\to2\leftarrow1)$ and consider the $\mathcal V$-space
\[
\X=
\left(
\mathbb S^1\longrightarrow *\longleftarrow\mathbb S^1
\right).
\]
The local motion planners on the two copies of $\mathbb S^1$ may be
chosen independently and grouped into a common two-member cover, since
compatibility at the terminal value is automatic. Hence
\[
\TC_{\mathcal V}(\X)=1.
\]
On the other hand,
\[
\varprojlim_{\mathcal V}\X
\cong
\mathbb S^1\times\mathbb S^1,
\]
and therefore
\[
\TC\left(\varprojlim_{\mathcal V}\X\right)=2.
\]
Thus,
\[
\TC\left(\varprojlim_{\mathcal V}\X\right)
>
\TC_{\mathcal V}(\X).
\]
\end{ex}

\begin{ex}\label{ex:colimit-does-not-give-lower-bound}
There is no analogous lower bound obtained from the colimit. Let
$\Lambda=(0\leftarrow2\to1)$ and consider
\[
\Y=
\left(
\mathbb S^1\longleftarrow *\longrightarrow\mathbb S^1
\right),
\]
where both maps select the basepoint. Compatible local motion planners
on the two circles give
\[
\TC_{\Lambda}(\Y)=1.
\]
However,
\[
\varinjlim_{\Lambda}\Y
\cong
\mathbb S^1\vee\mathbb S^1,
\]
whose reduced topological complexity is $2$. Consequently,
\[
\TC_{\Lambda}(\Y)
<
\TC\left(\varinjlim_{\Lambda}\Y\right).
\]

The obstruction is structural: colimits need not preserve products.
The diagrammatic endpoint evaluation only involves pairs of states
belonging to a common value of the diagram, whereas the square of the
colimit also contains pairs represented at different objects.
\end{ex}
\section{Change of base}
\label{sec:change-of-base}

Let $F\colon\CC\to\Dc$ be a functor. Precomposition defines the
\emph{change-of-base functor}
\[
F^*\colon\Top^{\Dc}\longrightarrow\Top^{\CC},
\qquad
F^*\X=\X\circ F.
\]
Thus, $F^*\X(c)=\X(F(c))$ and
$F^*\X(\alpha)=\X(F(\alpha))$.

Since limits and colimits in functor categories are computed objectwise,
$F^*$ preserves all objectwise constructions. In particular, it preserves
products, constant diagrams and homotopies. It also preserves open
subspaces and open covers: if $\{\Uu_i\}_{i\in I}$ is an open cover of a
$\Dc$-space $\X$, then $\{F^*\Uu_i\}_{i\in I}$ is an open cover of
$F^*\X$.

Indeed, for every $c\in\Obj(\CC)$,
\[
F^*\Uu_i(c)=\Uu_i(F(c))
\quad\text{and}\quad
\bigcup_{i\in I}F^*\Uu_i(c)=\X(F(c))=F^*\X(c).
\]
Similarly, if $H\colon\X\times\mathbb I\to\Y$ is a $\Dc$-homotopy from
$f$ to $g$, then $F^*H$ is a $\CC$-homotopy from $F^*f$ to $F^*g$,
using the natural identification
$F^*(\X\times\mathbb I)=F^*\X\times\mathbb I$.

\subsection{Final functors and orbit families}

Restriction along an arbitrary functor need not preserve orbits. The
appropriate condition is finality.

\begin{defi}\label{def:comma-d-F}
For $d\in\Obj(\Dc)$, the comma category $d\downarrow F$ has as objects
the pairs $(c,u)$, where $c\in\Obj(\CC)$ and
$u\colon d\to F(c)$. A morphism
$(c,u)\to(c',u')$ is a morphism $\alpha\colon c\to c'$ in $\CC$
satisfying $F(\alpha)\circ u=u'$.

The functor $F$ is \emph{final} if $d\downarrow F$ is nonempty and
connected for every $d\in\Obj(\Dc)$.
\end{defi}

Equivalently, \(F\) is final if, for every \(\Dc\)-space \(\X\), the
canonical map
\[
\varinjlim_{\CC}F^*\X\longrightarrow\varinjlim_{\Dc}\X
\]
is an isomorphism \cite[Chapter~IX, Section~3]{MacLaneCategories}.

\begin{prop}
\label{prop:final-colimits}
\label{prop:final-preserves-top-orbits}
\label{prop:final-preserves-disc-orbits}
Let $F\colon\CC\to\Dc$ be final.
\begin{enumerate}
    \item If $\T$ is a topological $\Dc$-orbit, then $F^*\T$ is a
    topological $\CC$-orbit.

    \item If $\T$ is a discrete $\Dc$-orbit, then $F^*\T$ is a
    discrete $\CC$-orbit.
\end{enumerate}
\end{prop}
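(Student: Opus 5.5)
Both assertions come down to the characterization of finality recalled just before the statement: for a final functor $F$, the canonical comparison map
\[
\varinjlim_{\CC}F^*\X\longrightarrow\varinjlim_{\Dc}\X
\]
is an isomorphism for every $\Dc$-diagram $\X$, with values in $\Top$ or in $\Set$. The plan is to apply this once with $\X=\T$, in the appropriate ambient category, and read off that $F^*\T$ has a one-point colimit.

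For the first assertion, let $\T$ be a topological $\Dc$-orbit, so $\varinjlim_{\Dc}\T\cong *$. Applying the comparison isomorphism in $\Top$ gives
\[
\varinjlim_{\CC}F^*\T\cong\varinjlim_{\Dc}\T\cong *,
\]
so $F^*\T$ is a topological $\CC$-orbit by definition. The only point to check is that the cited characterization of finality holds for $\Top$-valued diagrams, and not only for $\Set$-valued ones.

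\textbf{This is the main obstacle}, and I would handle it directly. Finality gives a bijection on underlying sets, since the forgetful functor $\Top\to\Set$ preserves colimits and the $\Set$ statement is the classical one in \cite{MacLaneCategories}. For the first assertion, however, no topology argument is needed, because any space whose underlying set is a single point is homeomorphic to $*$. Concretely, I would verify the bijection by hand:
\begin{itemize}
\item Nonemptiness of each $d\downarrow F$ shows that every element $z\in\T(d)$ is identified in the colimit with an element of the form $\T(u)(z)\in\T(F(c))$.
\item Connectedness of $d\downarrow F$ shows that the different choices of $(c,u)$ give elements in the same class of $\varinjlim_{\CC}F^*\T$.
\item Conversely, a zigzag in $\Dc$ between elements of the $F^*\T$-values can be pulled back to a zigzag in $\CC$, using the same comma-category arguments.
\end{itemize}

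For the second assertion, let $\T$ be a discrete $\Dc$-orbit. Then $F^*\T$ is again objectwise discrete, since its values $\T(F(c))$ are values of $\T$. Its $\Set$-colimit is a point by the $\Set$ version of the same comparison isomorphism. By Proposition~\ref{prop:discrete-orbit-indecomposable}, or directly from Definition~\ref{def:canonical-orbit-families}, $F^*\T$ is therefore a discrete $\CC$-orbit. Nonemptiness of $F^*\T$ is part of having a one-point colimit, so no separate check is required.
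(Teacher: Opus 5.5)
Your proposal is correct and follows the paper's proof: apply the comparison isomorphism $\varinjlim_{\CC}F^*\T\cong\varinjlim_{\Dc}\T$ in $\Top$ for the first assertion, and its $\Set$ version for the objectwise discrete $F^*\T$ in the second. The step you flag as the main obstacle is not really one, because Mac Lane's theorem on final functors holds for colimits in an arbitrary category, so the paper applies it directly in $\Top$. Your detour through underlying sets and the hand verification of the bijection are therefore unnecessary, though harmless; note that the detour relies on the forgetful functor $\Top\to\Set$ preserving colimits.
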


\begin{proof}
Finality gives
$\varinjlim_{\CC}F^*\T\cong\varinjlim_{\Dc}\T$. If $\T$ is a
topological orbit, the latter space is a point, proving the first
assertion.

If $\T$ is discrete, then $F^*\T$ is also objectwise discrete and
finality in $\Set$ gives
$\varinjlim_{\CC}^{\Set}F^*\T
\cong\varinjlim_{\Dc}^{\Set}\T\cong *$.
\end{proof}

\begin{rem}\label{rem:representables-change-base}
Finality does not generally preserve representable orbits. For
$d\in\Obj(\Dc)$,
\[
F^*\Dc(d,-)=\Dc(d,F(-)),
\]
which need not be representable in $\Set^{\CC}$. Additional hypotheses
are therefore required for the representable orbit family.
\end{rem}

Let $\mathcal O$ be an orbit family for $\Dc$ and $\mathcal P$ an orbit
family for $\CC$. We write
$F^*\mathcal O\subseteq\mathcal P$ when
$F^*\T\in\mathcal P$ for every $\T\in\mathcal O$.

\begin{thm}
\label{prop:change-base-cover}
\label{prop:change-base-homotopy}
\label{prop:change-base-categorical-open}
\label{thm:change-base-LS-category}
Suppose that $F^*\mathcal O\subseteq\mathcal P$. Then, for every
$\Dc$-space $\X$,
\[
\cat_{\CC}^{\mathcal P}(F^*\X)
\leq
\cat_{\Dc}^{\mathcal O}(\X).
\]
\end{thm}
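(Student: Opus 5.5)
The plan is to pull back an optimal $\mathcal O$-categorical cover of $\X$ along $F^*$ and check that each member stays $\mathcal P$-categorical. If $\cat_{\Dc}^{\mathcal O}(\X)=\infty$ there is nothing to prove. Otherwise fix an $\mathcal O$-categorical open cover $\{\Uu_0,\ldots,\Uu_n\}$ of $\X$ with $n=\cat_{\Dc}^{\mathcal O}(\X)$. As observed at the start of this section, $\{F^*\Uu_0,\ldots,F^*\Uu_n\}$ is an open cover of $F^*\X$. Each $F^*\Uu_i$ is an open $\CC$-subspace of $F^*\X$, because $F^*\Uu_i(c)=\Uu_i(F(c))$ is open in $\X(F(c))$ and its structure maps are restrictions of those of $F^*\X$. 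It therefore suffices to show that every $F^*\Uu_i$ is $\mathcal P$-categorical in $F^*\X$.

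Fix $i$ and choose $\T_i\in\mathcal O$, together with $\Dc$-maps $a_i\colon\Uu_i\to\T_i$ and $O_i\colon\T_i\to\X$ and a $\Dc$-homotopy $H_i\colon\Uu_i\times\mathbb I\to\X$ from $\iota_{\Uu_i}$ to $O_i\circ a_i$. Since $F^*$ is a functor, it gives $\CC$-maps $F^*a_i\colon F^*\Uu_i\to F^*\T_i$ and $F^*O_i\colon F^*\T_i\to F^*\X$. The hypothesis $F^*\mathcal O\subseteq\mathcal P$ gives $F^*\T_i\in\mathcal P$. Moreover, $F^*\iota_{\Uu_i}$ is exactly the inclusion $F^*\Uu_i\hookrightarrow F^*\X$, since both are objectwise the inclusion $\Uu_i(F(c))\hookrightarrow\X(F(c))$.

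Under the identification $F^*(\Uu_i\times\mathbb I)=F^*\Uu_i\times\mathbb I$, the map $F^*H_i$ is a $\CC$-homotopy from $F^*\iota_{\Uu_i}$ to $F^*(O_i\circ a_i)=F^*O_i\circ F^*a_i$. Its naturality squares are those of $H_i$ at the morphisms $F(\alpha)$, so this is the observation at the beginning of the section. Hence $F^*\Uu_i$ is $\mathcal P$-categorical, and the pulled-back cover has $n+1$ members, which gives the inequality.

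There is no real obstacle here. The only step needing care is the bookkeeping: checking that $F^*$ commutes with the product with the constant diagram $\mathbb I$ and with inclusions of subspaces, so that the pulled-back homotopy really has the form required by Definition~\ref{def:O-categorical-open}. Both hold because $F^*$ is computed objectwise and $\mathbb I$ is constant. Finality of $F$ is not needed at this stage; it only enters through the hypothesis $F^*\mathcal O\subseteq\mathcal P$, which Proposition~\ref{prop:final-preserves-top-orbits} guarantees for the topological and discrete families.
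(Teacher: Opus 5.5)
Your proof is correct and follows the paper's argument essentially step for step. Like the paper, you pull back each member of an $\mathcal O$-categorical cover along $F^*$, apply $F^*$ to the factorization $\iota_{\Uu}\simeq_{\Dc}O\circ a$, and use $F^*\T\in\mathcal P$ to conclude that each $F^*\Uu$ is $\mathcal P$-categorical, which gives a cover of $F^*\X$ with the same number of members.
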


\begin{proof}
Let $\Uu$ be $\mathcal O$-categorical in $\X$. Choose
$\T\in\mathcal O$ and maps $a\colon\Uu\to\T$ and
$O\colon\T\to\X$ such that
$\iota_{\Uu}\simeq_{\Dc}O\circ a$. Applying $F^*$ gives
\[
F^*\iota_{\Uu}
\simeq_{\CC}
F^*O\circ F^*a.
\]
The map $F^*\iota_{\Uu}$ is the inclusion of the open $\CC$-subspace
$F^*\Uu$ into $F^*\X$, and $F^*\T\in\mathcal P$. Hence $F^*\Uu$ is
$\mathcal P$-categorical.

Applying this observation to every member of an
$\mathcal O$-categorical cover of $\X$ gives a
$\mathcal P$-categorical cover of $F^*\X$ with the same number of
members.
\end{proof}

For the canonical discrete and topological families, finality provides
the required inclusion of orbit classes.

\begin{cor}\label{cor:change-base-canonical-LS}
Let $F\colon\CC\to\Dc$ be final. Then
\[
\cat_{\CC}^{\mathrm{disc}}(F^*\X)
\leq
\cat_{\Dc}^{\mathrm{disc}}(\X),
\qquad
\cat_{\CC}^{\mathrm{top}}(F^*\X)
\leq
\cat_{\Dc}^{\mathrm{top}}(\X).
\]
\end{cor}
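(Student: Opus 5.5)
This corollary is a direct specialization of Theorem~\ref{thm:change-base-LS-category}. That theorem holds for any pair of orbit families $\mathcal O$ for $\Dc$ and $\mathcal P$ for $\CC$ with $F^*\mathcal O\subseteq\mathcal P$. So it suffices to verify this inclusion in the two canonical cases:
\[
F^*\mathcal O_{\mathrm{disc}}(\Dc)\subseteq\mathcal O_{\mathrm{disc}}(\CC)
\qquad\text{and}\qquad
F^*\mathcal O_{\mathrm{top}}(\Dc)\subseteq\mathcal O_{\mathrm{top}}(\CC).
\]
Once these hold, applying the theorem with $\mathcal O=\mathcal P=\mathcal O_{\mathrm{disc}}$, and then with $\mathcal O=\mathcal P=\mathcal O_{\mathrm{top}}$, gives the two displayed inequalities for every $\Dc$-space $\X$.

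Both inclusions are exactly Proposition~\ref{prop:final-preserves-top-orbits}. If $\T$ is a topological $\Dc$-orbit, finality of $F$ gives $\varinjlim_{\CC}F^*\T\cong\varinjlim_{\Dc}\T\cong *$, so $F^*\T\in\mathcal O_{\mathrm{top}}(\CC)$. If $\T$ is a discrete $\Dc$-orbit, then $F^*\T$ is objectwise discrete. The same finality argument, carried out in $\Set$, gives $\varinjlim^{\Set}_{\CC}F^*\T\cong *$, so $F^*\T\in\mathcal O_{\mathrm{disc}}(\CC)$.

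We also check that the families involved are legitimate orbit families closed under isomorphism. If $\T'\cong\T$, then $F^*\T'\cong F^*\T$, and the target families are closed under isomorphisms by definition. So the class-level inclusion follows from the objectwise statement.

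There is no real obstacle. The only point needing care is the remark that finality is a purely categorical condition on $d\downarrow F$. It therefore yields colimit preservation both for $\Set$-valued and for $\Top$-valued diagrams, which is what the two cases require. The representable family must be excluded, consistent with Remark~\ref{rem:representables-change-base}.
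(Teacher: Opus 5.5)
Your proposal is correct and is essentially the paper's argument. You specialize Theorem~\ref{thm:change-base-LS-category} to the discrete and topological families, and the needed inclusions $F^*\mathcal O_{\mathrm{disc}}(\Dc)\subseteq\mathcal O_{\mathrm{disc}}(\CC)$ and $F^*\mathcal O_{\mathrm{top}}(\Dc)\subseteq\mathcal O_{\mathrm{top}}(\CC)$ come from Proposition~\ref{prop:final-preserves-top-orbits}, exactly as the paper intends.
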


For the representable family, we obtain the following conditional
version.

\begin{cor}\label{cor:change-base-representable-LS}
Suppose that $\Dc(d,F(-))$ is isomorphic to a representable
$\CC$-space for every $d\in\Obj(\Dc)$. Then, for every $\Dc$-space $\X$.
\[
\cat_{\CC}^{\mathrm{rep}}(F^*\X)
\leq
\cat_{\Dc}^{\mathrm{rep}}(\X).
\]
\end{cor}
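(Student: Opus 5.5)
The plan is to apply Theorem~\ref{thm:change-base-LS-category} with $\mathcal O=\mathcal O_{\mathrm{rep}}(\Dc)$ and $\mathcal P=\mathcal O_{\mathrm{rep}}(\CC)$. It then suffices to check the inclusion $F^*\mathcal O_{\mathrm{rep}}(\Dc)\subseteq\mathcal O_{\mathrm{rep}}(\CC)$. Once that holds, the theorem gives $\cat_{\CC}^{\mathrm{rep}}(F^*\X)\leq\cat_{\Dc}^{\mathrm{rep}}(\X)$ directly, with no further work on covers or homotopies.

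First, take an arbitrary member of $\mathcal O_{\mathrm{rep}}(\Dc)$. By definition it is a $\Dc$-space isomorphic to some $\Dc(d,-)$ with $d\in\Obj(\Dc)$, regarded as objectwise discrete. Since $F^*$ is a functor, it sends isomorphisms to isomorphisms, so $F^*\T\cong F^*\Dc(d,-)=\Dc(d,F(-))$. This is still objectwise discrete, because its values are the discrete sets $\Dc(d,F(c))$.

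Second, by the hypothesis, $\Dc(d,F(-))$ is isomorphic to a representable $\CC$-set $\CC(c_0,-)$. I read the hypothesis's ``isomorphic to a representable $\CC$-space'' as meaning an isomorphism of $\CC$-spaces. Because both diagrams are objectwise discrete, this is the same as an isomorphism of $\CC$-sets: every natural bijection between discrete spaces is automatically a homeomorphism objectwise. Hence $F^*\T\cong\CC(c_0,-)$ in $\Top^{\CC}$.

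Since $\mathcal O_{\mathrm{rep}}(\CC)$ is closed under isomorphisms, $F^*\T$ lies in it, which establishes the required inclusion. The theorem then concludes the proof. I expect no real obstacle. The only point needing care is the topology issue in the second step, and it is trivial because everything involved is discrete.
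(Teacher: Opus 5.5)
Your proposal is correct and follows the paper's approach: the paper gives no separate proof, but treats the corollary as an immediate consequence of Theorem~\ref{thm:change-base-LS-category} with $\mathcal O=\mathcal O_{\mathrm{rep}}(\Dc)$ and $\mathcal P=\mathcal O_{\mathrm{rep}}(\CC)$, where the hypothesis on $\Dc(d,F(-))$ and closure under isomorphisms give $F^*\mathcal O\subseteq\mathcal P$. Your observation that an isomorphism between objectwise discrete diagrams needs no extra topological check is a correct, harmless extra detail.
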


\subsection{Sectional category and topological complexity}

Unlike orbit-relative LS-category, sectional category behaves well under
arbitrary change of base; no finality assumption is needed.

\begin{prop}
\label{prop:change-base-sectional}
\label{prop:change-base-path-space}
\label{thm:change-base-TC}
Let $F\colon\CC\to\Dc$ be any functor.
\begin{enumerate}
    \item For every morphism $p\colon\Ee\to\Bb$ of $\Dc$-spaces,
    \[
    \sec_{\CC}(F^*p)\leq\sec_{\Dc}(p).
    \]

    \item For every $\Dc$-space $\X$,
    \[
    \TC_{\CC}(F^*\X)\leq\TC_{\Dc}(\X).
    \]
\end{enumerate}
\end{prop}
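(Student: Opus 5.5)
The plan is to push a sectional cover forward along $F^*$, exactly as in the proof of Theorem~\ref{thm:change-base-LS-category}, using only that $F^*$ is computed objectwise. For the first assertion, we may assume $\sec_{\Dc}(p)=n<\infty$ and fix a sectional cover $\{\Uu_0,\ldots,\Uu_n\}$ of $\Bb$ for $p$, with homotopy local sections $s_i\colon\Uu_i\to\Ee$ and $\Dc$-homotopies $H_i\colon\Uu_i\times\mathbb I\to\Bb$ from $p\circ s_i$ to $\iota_{\Uu_i}$. The opening remarks of Section~\ref{sec:change-of-base} show that $\{F^*\Uu_0,\ldots,F^*\Uu_n\}$ is an open cover of $F^*\Bb$. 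Each $F^*\Uu_i$ is an open $\CC$-subspace, and $F^*\iota_{\Uu_i}$ is its inclusion into $F^*\Bb$.

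Next we apply the functor $F^*$ to the data. This gives $\CC$-maps $F^*s_i\colon F^*\Uu_i\to F^*\Ee$. Under the identification $F^*(\Uu_i\times\mathbb I)=F^*\Uu_i\times\mathbb I$, the map $F^*H_i$ is a $\CC$-homotopy from $F^*(p\circ s_i)=F^*p\circ F^*s_i$ to $F^*\iota_{\Uu_i}$. Functoriality of $F^*$ is what makes composites go to composites. Hence each $F^*\Uu_i$ is sectional for $F^*p$, and we obtain $\sec_{\CC}(F^*p)\leq n$. No finality is needed, because no orbit condition is involved.

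For the second assertion we apply the first to $p=\pi_{\X}$. It remains to identify $F^*\pi_{\X}$ with $\pi_{F^*\X}$. Objectwise we have
\[
F^*(\X^{\mathbb I})(c)=\Hom_{\Top}(\mathbb I,\X(F(c)))=(F^*\X)^{\mathbb I}(c),
\]
and on a morphism $\alpha$ both sides act by postcomposition with $\X(F(\alpha))$. Moreover $F^*(\X\times\X)=F^*\X\times F^*\X$, since products are objectwise. Under these identities, $F^*\pi_{\X}$ is $\gamma\mapsto(\gamma(0),\gamma(1))$ at each object $c$, which is exactly $\pi_{F^*\X}$. Therefore $\TC_{\CC}(F^*\X)=\sec_{\CC}(F^*\pi_{\X})\leq\sec_{\Dc}(\pi_{\X})=\TC_{\Dc}(\X)$.

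I expect no real obstacle. The only point needing care is that these identifications are equalities, or natural isomorphisms compatible with the maps, rather than mere objectwise homeomorphisms. They hold on the nose, because $F^*$ just reindexes values and structure maps.
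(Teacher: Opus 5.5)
Your proposal is correct and follows the paper's proof essentially verbatim: you pull back a sectional cover and its homotopy local sections along $F^*$, using functoriality and the preservation of homotopies. You then identify $F^*\pi_{\X}$ with $\pi_{F^*\X}$ via $F^*(\X^{\mathbb I})=(F^*\X)^{\mathbb I}$ and $F^*(\X\times\X)=F^*\X\times F^*\X$.
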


\begin{proof}
Let $\{\Uu_0,\ldots,\Uu_n\}$ be a sectional cover of $\Bb$ for $p$,
with homotopy local sections $s_i\colon\Uu_i\to\Ee$. The subspaces
$F^*\Uu_i$ form an open cover of $F^*\Bb$, and
\[
F^*p\circ F^*s_i
=
F^*(p\circ s_i)
\simeq_{\CC}
F^*\iota_i.
\]
Thus, every $F^*\Uu_i$ is sectional for $F^*p$, proving the first
assertion.

For the second, there are natural identifications
\[
F^*(\X^{\mathbb I})=(F^*\X)^{\mathbb I},
\qquad
F^*(\X\times\X)=F^*\X\times F^*\X,
\]
under which $F^*\pi_{\X}=\pi_{F^*\X}$. Therefore,
\[
\TC_{\CC}(F^*\X)
=
\sec_{\CC}(F^*\pi_{\X})
\leq
\sec_{\Dc}(\pi_{\X})
=
\TC_{\Dc}(\X).
\]
\end{proof}

\subsection{Restriction to subcategories}

Let $\mathcal I\subseteq\CC$ be a subcategory and let
$j\colon\mathcal I\hookrightarrow\CC$ be the inclusion. Then
$j^*\X=\X|_{\mathcal I}$.

\begin{cor}
\label{cor:restriction-sectional}
\label{cor:subcategory-limit-sectional}
\label{cor:restriction-TC}
\label{cor:subcategory-limit-TC}
Let $\mathcal I\subseteq\CC$ be a subcategory.
\begin{enumerate}
    \item For every morphism $p\colon\Ee\to\Bb$ of $\CC$-spaces,
    \[
    \sec_{\mathcal I}(p|_{\mathcal I})
    \leq
    \sec_{\CC}(p).
    \]

    \item For every $\CC$-space $\X$,
    \[
    \TC_{\mathcal I}(\X|_{\mathcal I})
    \leq
    \TC_{\CC}(\X).
    \]

    \item If $\mathcal I$ has finitely many objects and a weakly initial
    object, then
    \[
    \sec\left(\varprojlim_{\mathcal I}p|_{\mathcal I}\right)
    \leq
    \sec_{\CC}(p)
    \]
    and
    \[
    \TC\left(\varprojlim_{\mathcal I}\X|_{\mathcal I}\right)
    \leq
    \TC_{\CC}(\X).
    \]
\end{enumerate}
\end{cor}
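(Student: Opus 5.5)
Parts (1) and (2) are the special case $F=j\colon\mathcal I\hookrightarrow\CC$ of Proposition~\ref{prop:change-base-sectional}. Part (3) then follows by combining these with the inverse-limit bounds of Proposition~\ref{prop:sectional_category_limits} and Proposition~\ref{prop:TC-limit}, applied over the indexing category $\mathcal I$.

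First I would note that $j^*$ is restriction along the inclusion, so $j^*p=p|_{\mathcal I}$ and $j^*\X=\X|_{\mathcal I}$. Proposition~\ref{prop:change-base-sectional}(1) gives $\sec_{\mathcal I}(p|_{\mathcal I})\leq\sec_{\CC}(p)$, and part (2) of the same proposition gives $\TC_{\mathcal I}(\X|_{\mathcal I})\leq\TC_{\CC}(\X)$. No finality or other hypothesis on $j$ is needed, since that proposition holds for an arbitrary functor.

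For (3), the hypotheses of finitely many objects and a weakly initial object are now imposed on $\mathcal I$ itself. They are exactly the hypotheses of Proposition~\ref{prop:sectional_category_limits} with $\mathcal I$ in place of $\CC$. Applying it to the $\mathcal I$-morphism $p|_{\mathcal I}$ gives $\sec(\varprojlim_{\mathcal I}p|_{\mathcal I})\leq\sec_{\mathcal I}(p|_{\mathcal I})$, and chaining with (1) yields the first inequality. Likewise, Proposition~\ref{prop:TC-limit}(2) applied to the $\mathcal I$-space $\X|_{\mathcal I}$ gives $\TC(\varprojlim_{\mathcal I}\X|_{\mathcal I})\leq\TC_{\mathcal I}(\X|_{\mathcal I})$, and chaining with (2) finishes.

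There is no real obstacle. The only point to check is that the finiteness and weak-initiality assumptions are needed only on $\mathcal I$, not on $\CC$. This holds because every inverse-limit argument takes place after restriction to $\mathcal I$, where those assumptions are imposed. In particular, $\CC$ may have infinitely many objects and need not have a weakly initial object.
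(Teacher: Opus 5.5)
Your proposal is correct and follows the paper's own proof: you obtain (1) and (2) by applying Proposition~\ref{prop:change-base-sectional} to the inclusion $j\colon\mathcal I\hookrightarrow\CC$. You obtain (3) by chaining those inequalities with Propositions~\ref{prop:sectional_category_limits} and~\ref{prop:TC-limit} applied over $\mathcal I$, which is exactly where the finiteness and weak-initiality hypotheses are needed.
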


\begin{proof}
The first two assertions follow from the preceding proposition applied
to the inclusion $j$. For the last two, combine these restriction
inequalities with Propositions~\ref{prop:sectional_category_limits} and
\ref{prop:TC-limit}, applied over $\mathcal I$.
\end{proof}

Consequently,
\[
\sup_{\substack{
\mathcal I\subseteq\CC\\
\mathcal I\text{ finite}\\
\mathcal I\text{ has a weakly initial object}
}}
\TC\left(\varprojlim_{\mathcal I}\X|_{\mathcal I}\right)
\leq
\TC_{\CC}(\X).
\]
\bibliographystyle{plain}
\bibliography{biblio}

\end{document}